\newtheorem{theorem}{Theorem}[section]
\newtheorem{lemma}[theorem]{Lemma}
\newtheorem{corollary}[theorem]{Corollary}
\newtheorem{definition}{Definition}[section]
\newdefinition{example}{Example}[section]
\newenvironment{proof}[1][Proof]{\noindent\textbf{#1. }}{\hfill $\Box$}
\numberwithin{equation}{section}
\newcommand*{\norm}[1]{\lVert #1 \rVert}
\begin{document}

\begin{frontmatter}

\title{On high-order schemes for tempered fractional partial differential equations}

\author[mymainaddress,mysecondaddress]{Linlin Bu}
\ead{bulinlinlin@126.com}
\author[mysecondaddress,mythirdaddress]{Cornelis W. Oosterlee}
\ead{c.w.oosterlee@cwi.nl}
\address[mymainaddress]{School of Mathematics and Statistics, Xi'an Jiaotong University, Xi'an, Shaanxi 710049, P.R. China}

\address[mysecondaddress]{Applied Mathematics (DIAM), Delft University of Technology, Delft, The Netherlands}
\address[mythirdaddress]{Centrum Wiskunde $\&$ Informatica, Amsterdam, The Netherlands}

\begin{abstract}\rm
\begin{adjustwidth}{2cm}{2cm}{\itshape\textbf{Abstract:}}
In this paper, we propose third-order semi-discretized schemes in space based on the tempered weighted and shifted Gr\"unwald difference (tempered-WSGD) operators for the tempered fractional diffusion equation. We also show stability and convergence analysis for the fully discrete scheme based a Crank--Nicolson scheme in time. A third-order scheme for the tempered Black--Scholes equation is also proposed and tested numerically. Some numerical experiments are carried out to confirm accuracy and effectiveness of these proposed methods.
\end{adjustwidth}
\end{abstract}

\end{frontmatter}

\section{Introduction}
\label{}
\par
Nowadays, differential equations with fractional operators are often encountered, in a variety of science and engineering fields, such as in physics~\cite{HU2020109540,2017Multigrid,Liu2004Numerical,Gorenflo2007Discrete,Wang2012A,Wang2019A,Linlin2019Stable}, finance~\cite{Wyss2000The,Jumarie2010Derivation,Liang2010Option,WENTING2014ANALYTICALLY}, and biology~\cite{Magin2004Fractional,Jeon2012Anomalous}.
Mathematically, fractional calculus concerns time-space coupled operators.
These derivatives are useful mathematical tools to describe memory properties and hereditary effects.
The fractional derivatives in time are concerned, for example, with particle sticking and trapping, while the spatial versions can be used to model long particle jumps.

\par {\em Tempered} fractional derivatives are also often used as spatial operators. The Tempered fractional operators are introduced, for example, to describe the probability density functions for the positions of particles by exponentially tempering the probability of large jumps of L\'evy flight. Their definition is very similar to the fractional calculus in \cite{Chen2015Dis}, however, they were introduced with a different background. Tempered derivatives have been widely applied in physics \cite{High2016Li,BAEUMER20102438}, finance \cite{Cartea2006Fractional,Zhang2016The} and also in ground water hydrology \cite{Meerschaert2008Tempered}.

The well-known Riemann-Liouville fractional derivatives are defined in ~\cite{Fractional1993}, as follows. For $\alpha\in(n-1,n)$, let $u(x)$ be $(n-1)$-times continuously differentiable on interval $(a,b)$ and its $n$-times derivative be integrable on any subinterval of $[a,b]$. Then,
  the {\em left} Riemann-Liouville fractional derivative of order $\alpha$ is defined as
  \begin{equation}
  _{a}D_x^{\alpha}u(x)=\frac{1}{\Gamma(n-\alpha)}\frac{d^n}{dx^n}\int_a^x \frac{u(\xi)}{(x-\xi)^{\alpha-n+1}}d\xi,
  \end{equation}
 and the {\em right} Riemann-Liouville fractional derivative of order $\alpha$ is defined as
  \begin{equation}
  _xD_b^{\alpha}u(x)=\frac{(-1)^n}{\Gamma(n-\alpha)}\frac{d^n}{dx^n}\int_x^b \frac{u(\xi)}{(\xi-x)^{\alpha-n+1}}d\xi,
  \end{equation}
where $\Gamma$ represents the gamma function.

The Riemann-Liouville {\em tempered} fractional derivatives were defined and employed in~\cite{PhysRevE}, as follows.
For $\alpha\in(n-1,n)$, let $u(x)$ be $(n-1)$-times continuously differentiable on $(a,b)$ with its $n$-times derivative integrable on any subinterval of $[a,b]$, $ \lambda\geq 0.$ Then the {\em left} Riemann-Liouville {\em tempered} fractional derivative of order $\alpha$ is defined as
  \begin{equation}
  _{a}D_x^{\alpha,\lambda}u(x)=(e^{-\lambda x}\ _{a}D_x^{\alpha}e^{\lambda x})u(x)=\frac{e^{-\lambda x}}{\Gamma(n-\alpha)}\frac{d^n}{dx^n}\int_a^x \frac{e ^{\lambda\xi}u(\xi)}{(x-\xi)^{\alpha-n+1}}d\xi;
  \end{equation}
 the {\em right} Riemann-Liouville {\em tempered} fractional derivative of order $\alpha$ is defined as
  \begin{equation}
  _xD_b^{\alpha,\lambda}u(x)=(e^{\lambda x}\ _{x}D_b^{\alpha}e^{-\lambda x})u(x)=\frac{(-1)^ne^{\lambda x}}{\Gamma(n-\alpha)}\frac{d^n}{dx^n}\int_x^b \frac{e ^{-\lambda\xi}u(\xi)}{(\xi-x)^{\alpha-n+1}}d\xi.
  \end{equation}
  It can be seen that the Riemann-Liouville tempered fractional derivatives $_{a}D_x^{\alpha,\lambda}u(x)$ and $_xD_b^{\alpha,\lambda}u(x)$ can be transformed into the Riemann-Liouville tempered fractional derivatives $_{a}D_x^{\alpha}u(x)$ and $_xD_b^{\alpha}u(x)$ when $\lambda=0$. Variants of the left and right Riemann-Liouville tempered fractional derivatives are defined in \cite{PhysRevE,BAEUMER20102438} as follows
  \begin{equation}
_{a}\bm{D}_x^{\alpha,\lambda}u(x)=\begin{cases}
_{a}D_x^{\alpha,\lambda}u(x)-\lambda^\alpha u(x),&0<\alpha<1\\[2mm]
_{a}D_x^{\alpha,\lambda}u(x)-\alpha\lambda^{\alpha-1}\partial_x u(x)-\lambda^\alpha u(x),&1<\alpha<2;
\end{cases}
\label{a1}
\end{equation}
  and
  \begin{equation}
_x\bm{D}_b^{\alpha,\lambda}u(x)=\begin{cases}
_xD_b^{\alpha,\lambda}u(x)-\lambda^\alpha u(x),&0<\alpha<1\\[2mm]
_xD_b^{\alpha,\lambda}u(x)-\alpha\lambda^{\alpha-1}\partial_x u(x)-\lambda^\alpha u(x),&1<\alpha<2.
\end{cases}
\label{a2}
\end{equation}
In the above definitions, '$a$' and '$b$' can be extended to $-\infty$ and $\infty$, respectively.

\par There are quite a few numerical approximations for tempered fractional differential equations available in the literature already. Existing numerical techniques include finite differences (\cite{Chen2015High,Deng2017Fast,High2016Li,Zhang2016The}), finite elements (\cite{Deng2016Variational}) and also spectral methods (\cite{Huang2018Spectral,Zhao2016Spectral}).
Cartea and Del-Castillo-Negrete \cite{Cartea2006Fractional} presented a finite difference scheme for the tempered fractional Black-Scholes equation.
Li and Deng \cite{High2016Li} constructed higher-order discretizations, by weighted and shifted Gr\"unwald type approximations to tempered fractional derivatives. They also presented the stability and convergence properties of the second-order scheme for the tempered fractional diffusion equation. Zhang etc. \cite{Zhang2016The} provided a second-order discretization for the tempered fractional Black-Scholes equation and also gave stability and convergence results.

\par Based on the methods in \cite{ High2016Li}, we construct the third-order accurate scheme for the fractional diffusion equation, which can be proved to be unconditional stable for a range of $\alpha$-values and stability can be numerically confirmed when parameter $\alpha$ lies outside this interval. Besides, we also propose a third-order accurate scheme for the tempered fractional Black-Scholes equations.

\par This paper is organized as follows. In Section \ref{sec:Def}, we give discretization details for the tempered fractional derivatives. In Section \ref{sec:scheme}, we present the third-order schemes for the fractional diffusion equation and carry out the corresponding stability and error analysis. Section \ref{sec:example} contains some numerical results to confirm the accuracy and efficiency of the proposed discretization methods. We also consider the tempered fractional Black-Scholes equation and provide numerical experiments to confirm the scheme's accuracy. Finally, a short summary is made in the last section.

\section{Discretization of the tempered fractional derivatives}\label{sec:Def}
In this section, we introduce approximation accuracy of the shifted Gr\"unwald type difference operator for the Riemann-Liouville tempered fractional derivatives ~(\ref{a1}) and (\ref{a2}). We denote
$_{a}\mathfrak{D}_x^{\alpha,\lambda}u(x)=\ _{a}D_x^{\alpha,\lambda} u(x)-\lambda^\alpha u(x)$ and $_{x}\mathfrak{D}_b^{\alpha,\lambda}u(x)=\ _{x}D_b^{\alpha,\lambda} u(x)-\lambda^\alpha u(x),$ where '$a$' and '$b$' can be extended to $-\infty$ and $\infty$, respectively.
\begin{lemma} From~\cite{High2016Li}.
Let $u(x) \in L^1(\Omega)$, $_{-\infty}D_x^{\alpha,\lambda} u$ and its Fourier transform belong to $L^1({\Omega})$; $p\in\mathbb{R},\ h>0,\ \lambda\geq0$ and $n-1<\alpha\leq n$. Define the shifted Gr\"unwald type difference operator, as
\begin{equation}
A_{h,p}^{\alpha,\lambda}u(x):
=\frac{1}{h^\alpha}\sum_{k=0}^\infty\omega_k^{(\alpha)}e^{-(k-p)h\lambda}u(x-(k-p)h)
-\frac{1}{h^\alpha}\left(e^{ph\lambda}(1-e^{-h\lambda})^\alpha\right)u(x),
\end{equation}
then
\begin{equation}
A_{h,p}^{\alpha,\lambda}u(x)=_{-\infty}\mathfrak{D}_x^{\alpha,\lambda} u+O(h),
\end{equation}
where $\omega^{(\alpha)}_k=(-1)^k C_k^\alpha$, $k\ge0$ denote the normalized Gr\"unwald weights from~\cite{Podlubny}.
Let $ _{x}D_{+\infty}^{\alpha,\lambda} u$ and its Fourier transform belong to $L^1({\Omega})$, then we define
\begin{equation}
B_{h,p}^{\alpha,\lambda}u(x):=\frac{1}{h^\alpha}\sum_{k=0}^\infty\omega_k^{(\alpha)}e^{-(k-p)h\lambda}u(x+(k-p)h)-\frac{1}{h^\alpha}\left(e^{p h\lambda}(1-e^{-h\lambda})^\alpha\right)u(x).
\end{equation}
so that
\begin{equation}
B_{h,p}^{\alpha,\lambda}u(x)= _{x}\mathfrak{D}_\infty^{\alpha,\lambda}u(x)+O(h).
\end{equation}
\end{lemma}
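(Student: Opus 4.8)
The plan is to work entirely on the Fourier side, where both the continuous operator and the Gr\"unwald-type difference operator become multiplication operators, and then to transfer an $O(h)$ symbol estimate back to physical space. Throughout I use the convention $\hat u(\omega)=\int_{-\infty}^{\infty}e^{-\mathrm{i}\omega x}u(x)\,dx$. The starting point is the well-known fact that the left Riemann--Liouville tempered derivative has Fourier symbol $(\lambda+\mathrm{i}\omega)^{\alpha}$, so that the operator $_{-\infty}\mathfrak{D}_x^{\alpha,\lambda}={}_{-\infty}D_x^{\alpha,\lambda}-\lambda^{\alpha}$ has symbol
\begin{equation}
\widehat{{}_{-\infty}\mathfrak{D}_x^{\alpha,\lambda}u}(\omega)=\bigl[(\lambda+\mathrm{i}\omega)^{\alpha}-\lambda^{\alpha}\bigr]\hat u(\omega).
\end{equation}
The hypotheses that $u$, $_{-\infty}D_x^{\alpha,\lambda}u$ and its transform lie in $L^1$ guarantee that these quantities are genuine functions and that the inversion below is legitimate.

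Next I would compute the symbol of $A_{h,p}^{\alpha,\lambda}$. Since the Fourier transform of $u(x-(k-p)h)$ is $e^{-\mathrm{i}\omega(k-p)h}\hat u(\omega)$, summing against the weights and invoking the generating identity $\sum_{k\ge0}\omega_k^{(\alpha)}z^{k}=(1-z)^{\alpha}$ with $z=e^{-h(\lambda+\mathrm{i}\omega)}$ gives, after factoring out $e^{ph(\lambda+\mathrm{i}\omega)}$,
\begin{equation}
\widehat{A_{h,p}^{\alpha,\lambda}u}(\omega)=\frac{1}{h^{\alpha}}\Bigl[e^{ph(\lambda+\mathrm{i}\omega)}\bigl(1-e^{-h(\lambda+\mathrm{i}\omega)}\bigr)^{\alpha}-e^{ph\lambda}\bigl(1-e^{-h\lambda}\bigr)^{\alpha}\Bigr]\hat u(\omega).
\end{equation}
Introducing the auxiliary function $W_p(w)=e^{pw}\bigl((1-e^{-w})/w\bigr)^{\alpha}$, which is analytic near $w=0$ with $W_p(0)=1$, the bracket can be written compactly as $(\lambda+\mathrm{i}\omega)^{\alpha}W_p(h(\lambda+\mathrm{i}\omega))-\lambda^{\alpha}W_p(h\lambda)$.

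The key estimate is then the Taylor expansion $W_p(w)=1+(p-\tfrac{\alpha}{2})w+O(w^2)$ as $w\to0$. Substituting and subtracting the continuous symbol yields
\begin{equation}
\widehat{A_{h,p}^{\alpha,\lambda}u}(\omega)-\bigl[(\lambda+\mathrm{i}\omega)^{\alpha}-\lambda^{\alpha}\bigr]\hat u(\omega)=h\Bigl(p-\tfrac{\alpha}{2}\Bigr)\bigl[(\lambda+\mathrm{i}\omega)^{\alpha+1}-\lambda^{\alpha+1}\bigr]\hat u(\omega)+R(\omega,h),
\end{equation}
where $R$ collects the higher-order terms. Applying the inverse Fourier transform and estimating in $L^{\infty}$ then produces the claimed $O(h)$ bound, provided the weighted transform $(\lambda+\mathrm{i}\omega)^{\alpha+1}\hat u(\omega)$ is integrable; this is exactly what the integrability assumptions on the tempered derivative (together with the implicit one order of extra smoothness) supply.

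I expect the main obstacle to be the rigorous control of the remainder rather than the formal expansion. Two points require care: first, justifying that $W_p$ is analytic and that its Taylor remainder is uniformly bounded along the ray $w=h(\lambda+\mathrm{i}\omega)$ as $\omega$ ranges over $\mathbb{R}$, which involves tracking the branch of $(1-e^{-w})^{\alpha}$ and of $(\lambda+\mathrm{i}\omega)^{\alpha}$ so that no spurious singularity is introduced for $\lambda\ge0$; and second, converting the pointwise symbol bound into a norm estimate, where the integrability of the order-$(\alpha+1)$ weighted transform is needed to dominate the $R(\omega,h)$ integral by $Ch$. Once the left operator is established, the right operator $B_{h,p}^{\alpha,\lambda}$ follows by the reflection $x\mapsto-x$, which replaces the symbol $(\lambda+\mathrm{i}\omega)^{\alpha}$ by $(\lambda-\mathrm{i}\omega)^{\alpha}$ and reproduces the same expansion verbatim, so no separate argument is required.
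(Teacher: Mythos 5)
The paper offers no proof of this lemma at all --- it is quoted verbatim from \cite{High2016Li} --- and the argument given there is exactly the Fourier-symbol proof you outline: identify the symbol $(\lambda+\mathrm{i}\omega)^{\alpha}-\lambda^{\alpha}$ of $_{-\infty}\mathfrak{D}_x^{\alpha,\lambda}$, compute the symbol of $A_{h,p}^{\alpha,\lambda}$ via the generating function $\sum_{k\ge0}\omega_k^{(\alpha)}z^k=(1-z)^{\alpha}$, factor out $W_p\bigl(h(\lambda+\mathrm{i}\omega)\bigr)$, Taylor-expand, and invert the transform. Your proposal is correct and matches that route, including the genuinely important caveat you flag: the $O(h)$ bound needs $(\lambda+\mathrm{i}\omega)^{\alpha+1}\hat u(\omega)\in L^1$, i.e.\ the regularity hypotheses of Theorem \ref{T2-2} with $l=1$ (one order beyond what the lemma's statement literally assumes, which appears to be a transcription slip in the present paper).
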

To develop and analyze the higher-order discretization schemes, we repeat here a highly relevant theorem from~\cite{High2016Li}.

\begin{theorem} \label{T2-2} From~\cite{High2016Li}. Let $u(x) \in L^1(\mathbb{R})$, $_{-\infty}\mathfrak{D}_x^{\alpha+l,\lambda}u(x)$ and its Fourier transform belong to $L^1({\mathbb{R}})$. 
We define the left tempered-WSGD operator by
\begin{equation}
\label{eq:2-a}
_L\mathcal{D}_{h,p_1,p_2,...,p_m}^{\alpha,\gamma_1,\gamma_2,...,\gamma_m}
=\sum^{m}_{j=1}\gamma_j A_{h,p_j}^{\alpha,\lambda}u(x),
\end{equation}
where $p_j,\ \gamma_j\in\mathbb{R}$ (and they are determined by the Equations \eqref{eq:1-a}-\eqref{eq:1-d} to follow). Then, for any integer $m\geq l$, there exists an operator such that
\begin{equation}
_L\mathcal{D}_{h,p_1,p_2,...,p_m}^{\alpha,\gamma_1,\gamma_2,...,\gamma_m}=_{-\infty}\mathfrak{D}_x^{\alpha,\lambda}u(x)+O(h^l),
\end{equation}
uniformly for $x\in\mathbb{R}$.

Let $u(x) \in L^1(\mathbb{R})$, $_{x}\mathfrak{D}_b^{\alpha,\lambda}u(x)$ and its Fourier transform belong to $L^1(\mathbb{R})$; and define the right tempered-WSGD operator by
\begin{equation}
\label{eq:2-b}
_R\mathcal{D}_{h,p_1,p_2,...,p_m}^{\alpha,\gamma_1,\gamma_2,...,\gamma_m}
=\sum^{m}_{j=1}\gamma_j B_{h,p_j}^{\alpha,\lambda}u(x),
\end{equation}
Then, for any integer $m\geq l$, there exists an operator such that
\begin{equation}
_R\mathcal{D}_{h,p_1,p_2,...,p_m}^{\alpha,\gamma_1,\gamma_2,...,\gamma_m}=_{x}\mathfrak{D}_\infty^{\alpha,\lambda}u(x)+O(h^l),
\end{equation}
uniformly for $x\in\mathbb{R}$.

Let $p_j,$ $\gamma_j$ be real-valued.
For $l=2$, $p_j,$ $\gamma_j$ should satisfy the following conditions
\begin{equation}
\label{eq:1-a}
\left\{
\begin{aligned}
&\sum_{j=1}^m\gamma_j=1,\\
&\sum_{j=1}^m\gamma_j\left[p_j-\frac{\alpha}{2}\right]=0.
\end{aligned}
\right.
\end{equation}

For $l=3$, $p_j,$ $\gamma_j$ should satisfy the following conditions
\begin{equation}
\label{eq:1-b}
\left\{
\begin{aligned}
&\sum_{j=1}^m\gamma_j=1,\\
&\sum_{j=1}^m\gamma_j\left[p_j-\frac{\alpha}{2}\right]=0,\\
&\sum_{j=1}^m\gamma_j\left[\frac{p_j^2}{2}-\frac{\alpha p_j}{2}+\frac{\alpha}{6}+\frac{\alpha(\alpha-1)}{8}\right]=0.
\end{aligned}
\right.
\end{equation}

For $l=4$, $p_j,$ $\gamma_j$ should satisfy
\begin{equation}
\label{eq:1-c}
\left\{
\begin{aligned}
&\sum_{j=1}^m\gamma_j=1,\\
&\sum_{j=1}^m\gamma_j\left[p_j-\frac{\alpha}{2}\right]=0,\\
&\sum_{j=1}^m\gamma_j\left[\frac{p_j^2}{2}-\frac{\alpha p_j}{2}+\frac{\alpha}{6}+\frac{\alpha(\alpha-1)}{8}\right]=0,\\
&\sum_{j=1}^m\gamma_j\left[\frac{p_j^3}{6}-\frac{\alpha p_j^2}{4}+\left(\frac{\alpha}{6}+\frac{\alpha(\alpha-1)}{8}\right)p_j-\frac{\alpha}{24}-\frac{\alpha(\alpha-1)}{12}-\frac{\alpha(\alpha-1)(\alpha-2)}{48}\right]=0,
\end{aligned}
\right.
\end{equation}

while for $l=5$, $p_j,$ $\gamma_j$ should satisfy
\begin{equation}
\label{eq:1-d}
\left\{
\begin{aligned}
&\sum_{j=1}^m\gamma_j=1,\\
&\sum_{j=1}^m\gamma_j\left[p_j-\frac{\alpha}{2}\right]=0,\\
&\sum_{j=1}^m\gamma_j\left[\frac{p_j^2}{2}-\frac{\alpha p_j}{2}+\frac{\alpha}{6}+\frac{\alpha(\alpha-1)}{8}\right]=0,\\
&\sum_{j=1}^m\gamma_j\left[\frac{p_j^3}{6}-\frac{\alpha p_j^2}{4}+\left(\frac{\alpha}{6}+\frac{\alpha(\alpha-1)}{8}\right)p_j
-\frac{\alpha}{24}-\frac{\alpha(\alpha-1)}{12}-\frac{\alpha(\alpha-1)(\alpha-2)}{48}\right]=0,\\
&\sum_{j=1}^m\gamma_j\left[\frac{p_j^4}{24}-\frac{\alpha p_j^3}{4}+\frac{1}{2}\left(\frac{\alpha}{6}+\frac{\alpha(\alpha-1)}{8}\right)p^2_j
-\left(\frac{\alpha}{24}-\frac{\alpha(\alpha-1)}{12}-\frac{\alpha(\alpha-1)(\alpha-2)}{48}\right)p_j\right.\\
&\left.+\frac{\alpha}{120}+\frac{5\alpha(\alpha-1)}{144}+\frac{\alpha(\alpha-1)(\alpha-2)}{48}+\frac{\alpha(\alpha-1)(\alpha-2)(\alpha-3)}{384}\right]=0.
\end{aligned}
\right.
\end{equation}
\end{theorem}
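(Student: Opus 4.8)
The plan is to carry out the whole argument on the Fourier side, since the shifted Grünwald weights have the elementary generating function $\sum_{k\ge0}\omega_k^{(\alpha)}\zeta^k=(1-\zeta)^\alpha$. Using the convention $\widehat u(\omega)=\int_{\mathbb R}e^{-\mathi\omega x}u(x)\,dx$ and the shift rule $\widehat{u(\cdot-c)}(\omega)=e^{-\mathi\omega c}\widehat u(\omega)$ with $c=(k-p)h$, I would first compute the symbol of the left operator in closed form. Setting $z=h(\lambda+\mathi\omega)$ and $w=h\lambda$, the tempering factor $e^{-(k-p)h\lambda}$ combines with the shift phase so that
\begin{equation}
\widehat{A_{h,p}^{\alpha,\lambda}u}(\omega)=\frac{1}{h^\alpha}\Big[e^{pz}(1-e^{-z})^\alpha-e^{pw}(1-e^{-w})^\alpha\Big]\widehat u(\omega).
\end{equation}
Since the exact Fourier symbol of $_{-\infty}D_x^{\alpha,\lambda}$ is $(\lambda+\mathi\omega)^\alpha$, the symbol of $_{-\infty}\mathfrak D_x^{\alpha,\lambda}$ is $(\lambda+\mathi\omega)^\alpha-\lambda^\alpha$; letting $h\to0$ in the display above recovers exactly this, which confirms both the computed symbol and the target. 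The whole theorem then reduces to showing that a suitable convex-type combination of these symbols reproduces $(\lambda+\mathi\omega)^\alpha-\lambda^\alpha$ up to $O(h^l)$.

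The heart of the proof is a generating-function expansion. I would introduce $g_p(z)=e^{pz}\big(\tfrac{1-e^{-z}}{z}\big)^\alpha$, which is analytic near $z=0$ with $g_p(0)=1$, and write $g_p(z)=\sum_{s\ge0}a_s(p,\alpha)z^s$. Because $z^\alpha/h^\alpha=(\lambda+\mathi\omega)^\alpha$ and $w^\alpha/h^\alpha=\lambda^\alpha$, the combined symbol $\sum_j\gamma_j\widehat{A_{h,p_j}^{\alpha,\lambda}u}/\widehat u$ equals $(\lambda+\mathi\omega)^\alpha\sum_{s\ge0}\big(\sum_j\gamma_j a_s(p_j)\big)z^s-\lambda^\alpha\sum_{s\ge0}\big(\sum_j\gamma_j a_s(p_j)\big)w^s$. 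The coefficients are obtained from $g_p=\exp\!\big(pz+\alpha\log\tfrac{1-e^{-z}}{z}\big)$ together with the Bernoulli-type expansion $\log\tfrac{1-e^{-z}}{z}=-\tfrac{z}{2}+\tfrac{z^2}{24}+0\cdot z^3-\tfrac{z^4}{2880}+\cdots$. For instance $a_1=p-\tfrac{\alpha}{2}$ and $a_2=\tfrac{E_1^2}{2}+E_2=\tfrac{p^2}{2}-\tfrac{\alpha p}{2}+\tfrac{\alpha^2}{8}+\tfrac{\alpha}{24}$, and one checks $\tfrac{\alpha}{6}+\tfrac{\alpha(\alpha-1)}{8}=\tfrac{\alpha}{24}+\tfrac{\alpha^2}{8}$, so $a_1$ and $a_2$ are precisely the brackets appearing in \eqref{eq:1-a}--\eqref{eq:1-b}; the cubic and quartic coefficients $a_3,a_4$ are verified the same way and reproduce the brackets in \eqref{eq:1-c}--\eqref{eq:1-d}. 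This is pure but error-prone bookkeeping, since the $\alpha(\alpha-1)$ and $\alpha(\alpha-1)(\alpha-2)$ structures arise only after composing the logarithmic (Bernoulli) series with the exponential.

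With the coefficients identified, imposing $\sum_j\gamma_j a_0(p_j)=\sum_j\gamma_j=1$ and $\sum_j\gamma_j a_s(p_j)=0$ for $1\le s\le l-1$ is exactly the system \eqref{eq:1-a}--\eqref{eq:1-d} for the corresponding $l$. Under these conditions both bracketed series collapse to $1+O(z^l)$ and $1+O(w^l)$, so the combined symbol becomes $(\lambda+\mathi\omega)^\alpha-\lambda^\alpha+\widehat\phi(h,\omega)$ with $\widehat\phi=O\big(h^l|\lambda+\mathi\omega|^{\alpha+l}\big)$. For the existence claim ($m\ge l$), I would choose distinct $p_j$; because $a_s(p)$ is a polynomial of degree exactly $s$ in $p$ (leading term $p^s/s!$), the $l\times l$ coefficient matrix $[a_s(p_j)]$ factors as a nonsingular lower-triangular matrix times a Vandermonde matrix, hence is invertible, giving a unique $\gamma$ for $m=l$ and a solvable underdetermined system for $m>l$. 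The right operator $B_{h,p}^{\alpha,\lambda}$ is handled identically after replacing $\mathi\omega$ by $-\mathi\omega$ throughout, its symbol being built on $(\lambda-\mathi\omega)^\alpha$.

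Finally I would return to physical space by inverse Fourier transform: the consistency error at $x$ is $\tfrac{1}{2\pi}\int\widehat\phi(h,\omega)\widehat u(\omega)e^{\mathi\omega x}\,d\omega$, bounded by $Ch^l\int|\lambda+\mathi\omega|^{\alpha+l}|\widehat u(\omega)|\,d\omega=Ch^l\,\norm{\widehat{{}_{-\infty}\mathfrak D_x^{\alpha+l,\lambda}u}}_{L^1}$, which is finite by hypothesis and independent of $x$, yielding the uniform $O(h^l)$ estimate. I expect the main obstacle to be precisely the symbol remainder bound $|\widehat\phi|\le Ch^l|\lambda+\mathi\omega|^{\alpha+l}$: this is not a mere Taylor estimate near $z=0$, because $z=h(\lambda+\mathi\omega)$ sweeps an unbounded vertical line and $g_p$ has branch points at $z=2\pi\mathi k$, so the remainder must be controlled uniformly along that line (splitting into $|z|\lesssim1$, where the Taylor remainder applies, and $|z|\gtrsim1$, where one estimates $g_{p_j}$ directly). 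It is this uniform symbol bound, combined with the $L^1$ assumption on the Fourier transform of the order-$(\alpha+l)$ tempered derivative, that upgrades the pointwise symbol consistency into the stated uniform-in-$x$ convergence.
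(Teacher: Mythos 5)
The paper never proves this theorem: it is stated as a result imported verbatim from \cite{High2016Li}, so there is no internal proof to compare against, and the relevant benchmark is the argument in that reference. Your proposal reconstructs essentially that argument correctly — computing the symbol of $A_{h,p}^{\alpha,\lambda}$ as $\frac{1}{h^\alpha}\bigl[e^{pz}(1-e^{-z})^\alpha-e^{pw}(1-e^{-w})^\alpha\bigr]$ with $z=h(\lambda+\mathi\omega)$, $w=h\lambda$, expanding $g_p(z)=e^{pz}\bigl((1-e^{-z})/z\bigr)^{\alpha}$ so that the coefficients $a_s(p)$ reproduce the brackets in \eqref{eq:1-a}--\eqref{eq:1-d} (your checks of $a_1$, $a_2$ and of the Bernoulli-type series are right), solving the moment conditions via the Vandermonde structure for $m\ge l$, and converting the $O\bigl(h^l\abs{\lambda+\mathi\omega}^{\alpha+l}\bigr)$ symbol remainder into a uniform-in-$x$ bound through the $L^1$ hypothesis on the Fourier transform of $_{-\infty}\mathfrak{D}_x^{\alpha+l,\lambda}u$ — including the genuinely delicate step you single out, namely that the remainder bound must hold uniformly along the whole vertical line $\mathrm{Re}\,z = h\lambda$ and not merely near $z=0$.
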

Considering a well-defined function $u(x)$ on the bounded interval $[a,b]$, which can be zero for $x<a$ or $x>b$, its $\alpha$-th order left and right Riemann-Liouville tempered fractional derivatives at point $x$ can be approximated by the following tempered-WSGD operators
\begin{eqnarray}
_{a}\mathfrak{D}_x^{\alpha,\lambda}u(x)&=&\sum_{j=1}^{m}\frac{\gamma_j}{h^\alpha}\Bigg(\sum^{\left[\frac{x-a}{h}\right]+p_j}_{k=0}\omega_{k}^{(\alpha)}u(x-(k-p_j)h)
-e^{p_j h\lambda}(1-e^{-h\lambda})^\alpha u(x)\Bigg)\nonumber\\
&\quad&+O(h^l),\nonumber\\
_{x}\mathfrak{D}_b^{\alpha,\lambda}u(x)&=&\sum_{j=1}^{m}\frac{\gamma_j}{h^\alpha}\Bigg(\sum^{\left[\frac{b-x}{h}\right]+p_j}_{k=0}\omega_{k}^{(\alpha)}u(x+(k-p_j)h)
-e^{p_j h\lambda}(1-e^{-h\lambda})^\alpha u(x)\Bigg)\nonumber\\
&\quad&+O(h^l),
\end{eqnarray}

Take $p_1=1,\ p_2=0,$ and $p_3=-1$ for the second-order scheme, with~(\ref{eq:1-a}). The parameters $\gamma_j~(j=1,2,3)$ satisfy the linear system as follows,
\begin{equation}
\left\{
\begin{aligned}
&\gamma_1=\frac{\alpha}{2}+\gamma_3,\\
&\gamma_2=\frac{2-\alpha}{2}-2\gamma_3.
\end{aligned}
\right.
\end{equation}
The second-order operators are then given by
\begin{equation}\label{2-22-1}
\begin{aligned}
_L\mathcal{D}_{h,1,0,-1}^{\alpha,\gamma_1,\gamma_2,\gamma_3}u(x)
=\sum_{j=1}^{3}\frac{\gamma_j}{h^\alpha}\Bigg(\sum^{\left[\frac{x-a}{h}\right]+p_j}_{k=0}\omega_{k}^{(\alpha)}u(x-(k-p_j)h)
-e^{p_j h\lambda}(1-e^{-h\lambda})^\alpha u(x)\Bigg)&\\
=\frac{1}{h^\alpha}\Bigg(\sum^{\left[\frac{x-a}{h}\right]+1}_{k=0}g_{k,\lambda}^{(2,\alpha)}u(x-(k-1)h)
-\left(\gamma_1e^{h\lambda}+\gamma_2+\gamma_3e^{-h\lambda}\right)(1-e^{-h\lambda})^\alpha u(x)\Bigg)&,
\end{aligned}
\end{equation}
and
\begin{equation}\label{2-23-1}
\begin{aligned}
_R\mathcal{D}_{h,1,0,-1}^{\alpha,\gamma_1,\gamma_2,\gamma_3}u(x_j)
=\sum_{j=1}^{3}\frac{\gamma_j}{h^\alpha}\Bigg(\sum^{\left[\frac{b-x}{h}\right]+p_j}_{k=0}\omega_{k}^{(\alpha)}u(x+(k-p_j)h)
-e^{p_j h\lambda}(1-e^{-h\lambda})^\alpha u(x)\Bigg)&\\
=\frac{1}{h^\alpha}\Bigg(\sum^{\left[\frac{b-x}{h}\right]+1}_{k=0}g_{k,\lambda}^{(2,\alpha)}u(x+(k-1)h)
-\left(\gamma_1e^{h\lambda}+\gamma_2+\gamma_3e^{-h\lambda}\right)(1-e^{-h\lambda})^\alpha u(x)\Bigg)&.
\end{aligned}
\end{equation}
The weights are given by
\begin{equation}\label{eq:2-24}
\begin{aligned}
&g_{0,\lambda}^{(2,\alpha)}=\gamma_1\omega_0^{(\alpha)}e^{h\lambda},\
g_{1,\lambda}^{(2,\alpha)}=\gamma_1\omega_1^{(\alpha)}+\gamma_2\omega_0^{(\alpha)},\\
&g_{k,\lambda}^{(2,\alpha)}=\left(\gamma_1\omega_{k}^{(\alpha)}+\gamma_2\omega_{k-1}^{(\alpha)}+\gamma_3\omega_{k-2}^{(\alpha)}\right)e^{-(k-1)h\lambda},\ k\geq2.
\end{aligned}
\end{equation}

The third-order scheme is obtained by the following result with $\gamma_j~(j=1,2,\cdots,m)$, satisfying Equation~(\ref{eq:1-b}) in Theorem \ref{T2-2}. Let $m=4,\ p_1=1,\ p_2=0,\ p_3=-1,\ p_4=-2.$ We have,
\begin{equation}
\left\{
\begin{aligned}
&\gamma_1=\frac{\alpha^2}{8}+\frac{5}{24}\alpha-\gamma_4,\\
&\gamma_2=-\frac{\alpha^2}{4}+\frac{1}{12}\alpha+1+3\gamma_4,\\
&\gamma_3=\frac{\alpha^2}{8}-\frac{7}{24}\alpha-3\gamma_4.
\end{aligned}
\right.
\end{equation}
The third-order operators are then given by
\begin{equation}\label{2-26-1}
\begin{aligned}
&_L\mathcal{D}_{h,1,0,-1,-2}^{\alpha,\gamma_1,\gamma_2,\gamma_3,\gamma_4}u(x)
=\sum_{j=1}^{4}\frac{\gamma_j}{h^\alpha}\Bigg(\sum^{\left[\frac{x-a}{h}\right]+p_j}_{k=0}\omega_{k}^{(\alpha)}u(x-(k-p_j)h)
-e^{p_j h\lambda}(1-e^{-h\lambda})^\alpha u(x)\Bigg)\\
&\qquad=\frac{1}{h^\alpha}\left(\sum^{\left[\frac{x-a}{h}\right]+1}_{k=0}g_{k,\lambda}^{(3,\alpha)}u(x-(k-1)h)
-\left(\gamma_1e^{h\lambda}+\gamma_2+\gamma_3e^{-h\lambda}+\gamma_4e^{-2h\lambda}\right)(1-e^{-h\lambda})^\alpha u(x)\right),
\end{aligned}
\end{equation}
and
\begin{equation}\label{2-27-1}
\begin{aligned}
&_R\mathcal{D}_{h,1,0,-1,-2}^{\alpha,\gamma_1,\gamma_2,\gamma_3,\gamma_4}u(x)
=\sum_{j=1}^{4}\frac{\gamma_j}{h^\alpha}\Bigg(\sum^{\left[\frac{b-x}{h}\right]+p_j}_{k=0}\omega_{k}^{(\alpha)}u(x-(k-p_j)h)
-e^{p_j h\lambda}(1-e^{-h\lambda})^\alpha u(x)\Bigg)\\
&\qquad=\frac{1}{h^\alpha}\left(\sum^{\left[\frac{b-x}{h}\right]+1}_{k=0}g_{k,\lambda}^{(3,\alpha)}u(x+(k-1)h)
-\left(\gamma_1e^{h\lambda}+\gamma_2+\gamma_3e^{-h\lambda}+\gamma_4e^{-2h\lambda}\right)(1-e^{-h\lambda})^\alpha u(x)\right).
\end{aligned}
\end{equation}
The weights are found to be
\begin{equation}\label{eq:2-28}
\begin{aligned}
&g_{0,\lambda}^{(3,\alpha)}=\gamma_1\omega_0^{(\alpha)}e^{h\lambda},\
g_{1,\lambda}^{(3,\alpha)}=\gamma_1\omega_1^{(\alpha)}+\gamma_2\omega_0^{(\alpha)},\\
&g_{2,\lambda}^{(3,\alpha)}=\left(\gamma_1\omega_2^{(\alpha)}+\gamma_2\omega_1^{(\alpha)}+\gamma_3\omega_0^{(\alpha)}\right)e^{-h\lambda},\\
&g_{k,\lambda}^{(3,\alpha)}=\left(\gamma_1\omega_{k}^{(\alpha)}+\gamma_2\omega_{k-1}^{(\alpha)}
+\gamma_3\omega_{k-2}^{(\alpha)}+\gamma_4\omega_{k-3}^{(\alpha)}\right)e^{-(k-1)h\lambda},\ k\geq3.
\end{aligned}
\end{equation}
We will discretize and analyze this latter scheme in the section to follow.

\section{Numerical schemes for the tempered fractional diffusion equation}\label{sec:scheme}
\par Here we construct a high-order scheme based on the tempered-WSGD operators in space and the Crank--Nicolson scheme in time for the tempered fractional diffusion equation. Then we establish the stability and convergence for the third-order scheme.
\par Consider the following tempered fractional diffusion equation, from~\cite{High2016Li},
\begin{equation}
\label{4-0}
\left\{
\begin{aligned}
&\frac{\partial u(x,t)}{\partial t}
=c_l \cdot \left(_{a}\mathfrak{D}_x^{\alpha,\lambda}u(x,t)\right)+c_r \cdot \left(_x\mathfrak{D}_{b}^{\alpha,\lambda}u(x,t)\right)
+f(x,t),\\
&(x,t)\in(a,b)\times(0,T)\\
&u(a,t)=\Phi_l(t),u(b,t)=\Phi_r(t),\ t\in(0,T)\\
&u(x,T)=S(x),\ x\in(a,b),
\end{aligned}
\right.
\end{equation}
where $\alpha\in(1,2)$, $c_l$ and $c_r$ are constants in front of the fractional derivatives with $c_l+c_r\neq0$ which usually control the bias of the diffusion. And if $c_l\neq0$, then $\Phi_l(t)\equiv0$; if $c_r\neq0$, then $\Phi_r(t)\equiv0$.
\par Let $t_j=j\tau$ $(0\leq t_j\leq T,\ j=0,\dots,N)$ and $x_i=a+ih$ $(a\leq x_i\leq b,\ i=0,\dots,M)$, where $\tau=T/N$ and $h=(b-a)/M$. Using the k-th order tempered-WSGD operators $_L\mathcal{D}^{\alpha,\lambda_1}_{h,k}~=~_L\mathcal{D}_{h,p_1,p_2,...,p_m}^{\alpha,\gamma_1,\gamma_2,...,\gamma_m}$ and $_R\mathcal{D}^{\alpha,\lambda_2}_{h,k}~=~_R\mathcal{D}_{h,p_1,p_2,...,p_m}^{\alpha,\gamma_1,\gamma_2,...,\gamma_m}$ for the tempered fractional derivatives and Crank-Nicolson time discretization, the numerical scheme for \eqref{4-0} reads
\begin{equation}
\label{4-2}
\begin{aligned}
&\frac{u^{j+1}_{i}-u_{i}^j}{\tau}
=c_l\left(_L\mathcal{D}^{\alpha,\lambda}_{h,k}{u_{i}^{j+\frac{1}{2}}}\right)
+c_r\left(_R\mathcal{D}^{\alpha,\lambda}_{h,k}{u_{i}^{j+\frac{1}{2}}}\right)
+f^{j+\frac{1}{2}}_{i}+O(\tau^2+ h^k),
\end{aligned}
\end{equation}
where $u_{i}^j$ represents the solution of \eqref{4-0} at the point $(x_i,t_j),$ and $f_i^{j+\frac{1}{2}}=f(x_i,t_{j+\frac{1}{2}}).$
\par Then, we obtain,
\begin{eqnarray}
\label{4-4}
u_{i}^{j+1} &-&\frac{\tau}{2}\left[
c_l\left(_L\mathcal{D}^{\alpha,\lambda}_{h,k}u_{i}^{j+1}\right)
+c_r\left(_R\mathcal{D}^{\alpha,\lambda}_{h,k}u_{i}^{j+1}\right)
\right]\nonumber \\
&=&u_{i}^j
+\frac{\tau}{2}\left[
c_l\left(_L\mathcal{D}^{\alpha,\lambda}_{h,k}u_{i}^{j}\right)
+c_r\left(_R\mathcal{D}^{\alpha,\lambda}_{h,k}u_{i}^{j}\right)
\right]+\tau f_i^{j+\frac{1}{2}}+O(\tau^3+\tau h^k).
\end{eqnarray}
\par We denote by $U_{i}^{j}$ the solution of the numerical scheme for \eqref{4-0} at point $(x_i,t_j)$ and $F_i^{j+\frac{1}{2}}=\frac{1}{2}(f_i^{j}+f_i^{j+1})$. The numerical scheme can now be written as

\begin{eqnarray}
\label{4-7}
U_{i}^{j+1}
&-&\frac{\tau}{2}\left[
c_l \cdot \left(_L\mathcal{D}^{\alpha,\lambda}_{h,k}U_{i}^{j+1}\right)
+c_r \cdot \left(_R\mathcal{D}^{\alpha,\lambda}_{h,k}U_{i}^{j+1}\right)
\right]\nonumber \\
&=&U_{i}^j
+\frac{\tau}{2}\left[
c_l \cdot \left(_L\mathcal{D}^{\alpha,\lambda}_{h,k}U_{i}^{j}\right)
+c_r \cdot \left(_R\mathcal{D}^{\alpha,\lambda}_{h,k}U_{i}^{j}\right)
\right]+\tau F_i^{j+\frac{1}{2}}.
\end{eqnarray}

\par Denote $U^n=(U_1^n,U^n_2,...,U^n_{M-1})^T$, $\phi^{(k,m)}(\lambda)=\sum_{j=1}^m\gamma_j e^{p_j h\lambda}(1-e^{-h\lambda})^\alpha$, and
\begin{equation}\label{e3-5}
B_{k,\lambda}=\left(
\begin{array}{ccccc}
g_{1,\lambda}^{(k,\alpha)}-\phi^{(k,m)}(\lambda)&g_{0,\lambda}^{(k,\alpha)}
& & &\\
g_{2,\lambda}^{(k,\alpha)}&g_{1,\lambda}^{(k,\alpha)}-\phi^{(k,m)}(\lambda)&g_{0,\lambda}^{(k,\alpha)}
& &\\
\vdots      &g_{2,\lambda}^{(k,\alpha)}&g_{1,\lambda}^{(k,\alpha)}-\phi^{(k,m)}(\lambda)&\ddots&\\
g_{n-2,\lambda}^{(k,\alpha)}&\dots&\ddots&\ddots &g_{0,\lambda}^{(k,\alpha)}
\\
g_{n-1,\lambda}^{(k,\alpha)}&g_{n-2,\lambda}^{(k,\alpha)}&\dots&g_{2,\lambda}^{(k,\alpha)}&g_{1,\lambda}^{(k,\alpha)}-\phi^{(k,m)}(\lambda)
\end{array}\right).
\end{equation}
\par The matrix form for \eqref{4-7} can be written as follows
\begin{equation}
\label{eq:4-c}
\begin{aligned}
&\left(I-\frac{\tau}{2h^\alpha}(c_lB_{k,\lambda}+c_rB_{k,\lambda}^T)\right)U^{j+1}
=\left(I+\frac{\tau}{2h^\alpha}(c_lB_{k,\lambda}+c_rB_{k,\lambda}^T)\right)U^{j}+\tau \hat{F}_i^{j+\frac{1}{2}},
\end{aligned}
\end{equation}
where
\begin{equation}
\hat{F}^{n+\frac{1}{2}}=\left(
\begin{array}{c}
F^{n+\frac{1}{2}}_1\\
F^{n+\frac{1}{2}}_2\\
\vdots\\
F^{n+\frac{1}{2}}_{M-2}\\
F^{n+\frac{1}{2}}_{M-1}
\end{array}\right)+\frac{1}{2h^\alpha}
\left(
\begin{array}{c}
c_lg_{2,\lambda}^{(k,\alpha)}+c_r g_{0,\lambda}^{(k,\alpha)}\\
c_lg_{3,\lambda}^{(k,\alpha)}\\
\vdots\\
c_lg_{M-1,\lambda}^{(k,\alpha)}\\
c_lg_{M,\lambda}^{(k,\alpha)}
\end{array}\right)(U^n_0+U_0^{n+1})
+\frac{1}{2h^\alpha}
\left(
\begin{array}{c}
c_r g_{M,\lambda}^{(k,\alpha)}\\
c_r g_{M-1,\lambda}^{(k,\alpha)}\\
\vdots\\
c_r g_{3,\lambda}^{(k,\alpha)}\\
c_r g_{2,\lambda}^{(k,\alpha)}+c_lg_{0,\lambda}^{(k,\alpha)}
\end{array}\right)
(U^n_M+U_M^{n+1}).
\end{equation}
\subsection{The second-order numerical scheme}
\par In this subsection, we detail the second-order scheme for the tempered fractional diffusion equation.  
Using the tempered-WSGD operators, $_L\mathcal{D}^{\alpha,\lambda_1}_{h,2}=_L\mathcal{D}_{h,-1,0,1}^{\alpha,\gamma_1,\gamma_2,\gamma_3}$ and $_R\mathcal{D}^{\alpha,\lambda_2}_{h,2}=_R\mathcal{D}_{h,-1,0,1}^{\alpha,\gamma_1,\gamma_2,\gamma_3}$, for the tempered fractional derivatives and the Crank-Nicolson discretization in time, we find the following second-order scheme for \eqref{4-0},
\begin{eqnarray}
\label{4-1-1}
u_{i}^{j+1} & - & \frac{\tau}{2}\left[
c_l\left(_L\mathcal{D}^{\alpha,\lambda}_{h,2}u_{i}^{j+1}\right)
+c_r\left(_R\mathcal{D}^{\alpha,\lambda}_{h,2}u_{i}^{j+1}\right)
\right] \nonumber \\
&=&u_{i}^j
+\frac{\tau}{2}\left[
c_l\left(_L\mathcal{D}^{\alpha,\lambda}_{h,2}u_{i}^{j}\right)
+c_r\left(_R\mathcal{D}^{\alpha,\lambda}_{h,2}u_{i}^{j}\right)
\right]+\tau f_i^{j+\frac{1}{2}}+O(\tau^3+\tau h^2).
\end{eqnarray}
The numerical scheme can be written as,
\begin{eqnarray}
\label{4-1-2}
U_{i}^{j+1}
&-&\frac{\tau}{2}\left[
c_l\left(_L\mathcal{D}^{\alpha,\lambda}_{h,2}U_{i}^{j+1}\right)
+c_r\left(_R\mathcal{D}^{\alpha,\lambda}_{h,2}U_{i}^{j+1}\right)
\right]\nonumber\\
&=&U_{i}^j
+\frac{\tau}{2}\left[
c_l\left(_L\mathcal{D}^{\alpha,\lambda}_{h,2}U_{i}^{j}\right)
+c_r\left(_R\mathcal{D}^{\alpha,\lambda}_{h,2}U_{i}^{j}\right)
\right]+\tau F_i^{j+\frac{1}{2}},
\end{eqnarray}
and the following matrix form results,
\begin{equation}
\label{eq:4-1-c}
\left(I-\frac{\tau}{2h^\alpha}(c_l B_{\lambda}+c_r B_{\lambda}^T)\right)U^{j+1}
=\left(I+\frac{\tau}{2h^\alpha}(c_l B_{\lambda}+c_r B_{\lambda}^T)\right)U^{j}+\tau \hat{F}_i^{j+\frac{1}{2}},
\end{equation}
where 
$B_{\lambda}=B_{2,\lambda}$, as defined in \eqref{e3-5}.

\par The stability and convergence for the second-order scheme have already been presented in \cite{High2016Li} in the following theorem, based on the lemma below.
\begin{lemma} From~\cite{High2016Li}.\label{l3-1}
For $1<\alpha<2$ and $\lambda\geq0$, if $$max\{\frac{(2-\alpha)(\alpha^2+\alpha-8)}{2(\alpha^2+3\alpha+2)},\frac{(1-\alpha)(\alpha^2+2\alpha)}{2(\alpha^2+3\alpha+4)}\}
<\gamma_3<\frac{(2-\alpha)(\alpha^2+2\alpha-3)}{2(\alpha^2+3\alpha+2)},$$ then the weighs coefficients $\omega^{(\alpha)}_k$ and $g_{k,\lambda}^{(2,\alpha)}$ satisfy
\begin{enumerate}
  \item $\omega^{(\alpha)}_0=1$, $\omega^{(\alpha)}_1=-\alpha,$ $0\leq...\leq\omega^{(\alpha)}_3\leq\omega^{(\alpha)}_2\leq1$, $\sum_{k=0}^{\infty}\omega^{(\alpha)}_k=0$.
  \item $g_{1,\lambda}^{(2,\alpha)}\leq0$, $g_{0,\lambda}^{(2,\alpha)}+g_{2,\lambda}^{(2,\alpha)}\geq0$, $g_{k,\lambda}^{(2,\alpha)}\geq0(k\geq3).$
\end{enumerate}
\end{lemma}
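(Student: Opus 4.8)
The plan is to handle the two parts independently, reducing Part 1 to the elementary arithmetic of the generalized binomial coefficients $\omega_k^{(\alpha)}=(-1)^k\binom{\alpha}{k}$ and Part 2 to a collection of linear inequalities in the single free parameter $\gamma_3$ (the relations $\gamma_1=\frac{\alpha}{2}+\gamma_3$ and $\gamma_2=\frac{2-\alpha}{2}-2\gamma_3$ eliminate the other two weights).

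\emph{Part 1.} The values $\omega_0^{(\alpha)}=1$ and $\omega_1^{(\alpha)}=-\alpha$ are immediate. For the rest I would use the recursion $\omega_k^{(\alpha)}=\left(1-\frac{\alpha+1}{k}\right)\omega_{k-1}^{(\alpha)}$, coming from the ratio of consecutive binomial coefficients. Since $1<\alpha<2$, the factor $1-\frac{\alpha+1}{2}$ is negative, so $\omega_2^{(\alpha)}=\frac{\alpha(\alpha-1)}{2}>0$; for every $k\ge3$ the factor lies in $(0,1)$ and increases toward $1$, which forces $\omega_k^{(\alpha)}>0$ and $\omega_k^{(\alpha)}<\omega_{k-1}^{(\alpha)}$, giving the monotone chain $0\le\cdots\le\omega_3^{(\alpha)}\le\omega_2^{(\alpha)}$. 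The bound $\omega_2^{(\alpha)}\le1$ is a one-line estimate of $\alpha(\alpha-1)$ on $(1,2)$, and $\sum_{k\ge0}\omega_k^{(\alpha)}=0$ follows from the generating function $\sum_{k\ge0}\omega_k^{(\alpha)}z^k=(1-z)^\alpha$ evaluated at $z=1$ (the series converging there because $\alpha>0$).

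\emph{Part 2, reduction and computation.} Because the factor $e^{-(k-1)h\lambda}$ in \eqref{eq:2-24} is positive, the sign of $g_{k,\lambda}^{(2,\alpha)}$ for $k\ge2$ is that of the bracket $h_k:=\gamma_1\omega_k^{(\alpha)}+\gamma_2\omega_{k-1}^{(\alpha)}+\gamma_3\omega_{k-2}^{(\alpha)}$, while $g_{1,\lambda}^{(2,\alpha)}=\gamma_1\omega_1^{(\alpha)}+\gamma_2$ carries no exponential. For the mixed term $g_{0,\lambda}^{(2,\alpha)}+g_{2,\lambda}^{(2,\alpha)}=\gamma_1e^{h\lambda}+h_2e^{-h\lambda}$ I would first note that the stated interval forces $\gamma_1=\frac{\alpha}{2}+\gamma_3>0$ (since $\gamma_3>\frac{1-\alpha}{2}>-\frac{\alpha}{2}$), so after multiplying by $e^{h\lambda}$ the expression is increasing in $\lambda$ and it suffices to verify the case $\lambda=0$, namely $\gamma_1+h_2\ge0$. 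The convenient device for the algebra is the identity $\sum_k h_k z^k=\left(\gamma_1+\gamma_2z+\gamma_3z^2\right)(1-z)^\alpha$, from which the coefficient of $\gamma_3$ in $h_k$ equals $[z^k](1-z)^{\alpha+2}=\omega_k^{(\alpha+2)}$. As $\alpha+2\in(3,4)$, Part 1 applied at order $\alpha+2$ gives $\omega_3^{(\alpha+2)}<0$ and $\omega_k^{(\alpha+2)}>0$ for all $k\ge4$; hence $h_3\ge0$ is an \emph{upper} bound and each $h_k\ge0$ $(k\ge4)$ a \emph{lower} bound on $\gamma_3$. The elementary computations then give: $h_3\ge0\Rightarrow\gamma_3\le\frac{(2-\alpha)(\alpha^2+2\alpha-3)}{2(\alpha^2+3\alpha+2)}$, $h_4\ge0\Rightarrow\gamma_3\ge\frac{(2-\alpha)(\alpha^2+\alpha-8)}{2(\alpha^2+3\alpha+2)}$, and $\gamma_1+h_2\ge0\Rightarrow\gamma_3\ge\frac{(1-\alpha)(\alpha^2+2\alpha)}{2(\alpha^2+3\alpha+4)}$; the condition $g_{1,\lambda}^{(2,\alpha)}\le0$ reduces only to $\gamma_3\ge\frac{1-\alpha}{2}$, which is dominated by the two lower bounds and so redundant. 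Intersecting these reproduces the interval in the statement.

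\emph{Main obstacle.} The genuinely non-routine point is that the single inequality $h_4\ge0$ must force $h_k\ge0$ for \emph{all} $k\ge4$; equivalently, the family of lower bounds $L_k:=-\,[\text{constant term of }h_k]/\omega_k^{(\alpha+2)}$ must attain its maximum at $k=4$. I would establish this by dividing by $\omega_{k-2}^{(\alpha)}>0$ and using the ratio recursion: since $h_k/\omega_{k-2}^{(\alpha)}=\gamma_1\frac{\omega_k^{(\alpha)}}{\omega_{k-2}^{(\alpha)}}+\gamma_2\frac{\omega_{k-1}^{(\alpha)}}{\omega_{k-2}^{(\alpha)}}+\gamma_3\to\gamma_1+\gamma_2+\gamma_3=1>0$ as $k\to\infty$, the tail constraints become vacuous, and a monotonicity estimate on the ratios $\omega_{k-1}^{(\alpha)}/\omega_{k-2}^{(\alpha)}$ pins the binding lower bound at $k=4$. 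This monotonicity argument, together with the careful treatment of the exponential weights in $g_{0,\lambda}^{(2,\alpha)}+g_{2,\lambda}^{(2,\alpha)}$, is where I expect the proof to be least automatic.
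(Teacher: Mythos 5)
The paper never proves Lemma \ref{l3-1} itself --- it is imported verbatim from \cite{High2016Li} --- so the only in-paper argument to compare against is the proof of its third-order analogue, Theorem \ref{l3-8}. Measured against that, your proposal is correct in substance but follows a genuinely different route. Your bookkeeping does reproduce the stated interval: $g_{1,\lambda}^{(2,\alpha)}\leq 0$ is equivalent to $\gamma_3\geq\frac{1-\alpha}{2}$ (redundant, being below both lower bounds), $\gamma_1+h_2\geq0$ gives the second entry of the max, $h_3\geq0$ the upper endpoint, and $h_4\geq0$ the first entry of the max; your handling of the exponentials in $g_{0,\lambda}^{(2,\alpha)}+g_{2,\lambda}^{(2,\alpha)}$ is the same device the paper uses in Theorem \ref{l3-8} (there phrased as $e^{h\lambda}\geq e^{-h\lambda}$ together with $\gamma_1\geq0$). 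What is genuinely different is (i) the decomposition $\gamma_1+\gamma_2z+\gamma_3z^2=\frac{\alpha}{2}+\frac{2-\alpha}{2}z+\gamma_3(1-z)^2$, which identifies the $\gamma_3$-coefficient of $h_k$ as $\omega_k^{(\alpha+2)}$ and so explains \emph{a priori} why $h_3$ caps $\gamma_3$ from above while $h_k$, $k\geq4$, cap it from below --- the paper instead substitutes the explicit $\gamma_j$'s and grinds through polynomial algebra term by term; and (ii) the tail $k\geq5$, which you treat by the ratio recursion $\omega_k^{(\alpha)}/\omega_{k-1}^{(\alpha)}=1-\frac{\alpha+1}{k}$, where the paper extracts an explicit cubic in $k$ and shows it is decreasing for $k\geq5$. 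Your route buys less computation and generalizes cleanly (at third order the $\gamma_4$-coefficient of $h_k$ is $-\omega_k^{(\alpha+3)}$ by the same trick); the paper's route is more elementary but computation-heavy.

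One ingredient must be added before your tail argument closes, and it is exactly at the spot you flagged. Writing $\psi(k)=h_k/\omega_{k-2}^{(\alpha)}=\gamma_1r_kr_{k-1}+\gamma_2r_{k-1}+\gamma_3$ with $r_k=1-\frac{\alpha+1}{k}$, the increments are $\psi(k+1)-\psi(k)=\gamma_1r_k(r_{k+1}-r_{k-1})+\gamma_2(r_k-r_{k-1})$, so monotonicity of $\psi$ --- which is what pins the binding lower bound at $k=4$ --- needs \emph{both} $\gamma_1\geq0$ and $\gamma_2\geq0$; you verified only the former. The latter does hold on the stated interval: $\gamma_2=\frac{2-\alpha}{2}-2\gamma_3>0$ iff $\gamma_3<\frac{2-\alpha}{4}$, and the upper endpoint satisfies $\frac{(2-\alpha)(\alpha^2+2\alpha-3)}{2(\alpha^2+3\alpha+2)}<\frac{2-\alpha}{4}$ because this reduces to $\alpha^2+\alpha-8<0$ on $(1,2)$. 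Note also that the limit $\psi(k)\to\gamma_1+\gamma_2+\gamma_3=1$ by itself only yields $h_k\geq0$ for all sufficiently large $k$, not for every $k\geq5$, so the monotonicity step cannot be dispensed with. With that one line supplied, your proof is complete.
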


\begin{theorem} From~\cite{High2016Li}.
For $1<\alpha<2$, and $\lambda_1,\ \lambda_2\geq0$, if $a_1<\gamma_3<a_2$,  the numerical scheme \eqref{eq:4-c} is stable for $$a_1=max\{\frac{(2-\alpha)(\alpha^2+\alpha-8)}{2(\alpha^2+3\alpha+2)},\frac{(1-\alpha)(\alpha^2+2\alpha)}{2(\alpha^2+3\alpha+4)}\}$$ and
$$a_2=\frac{(2-\alpha)(\alpha^2+2\alpha-3)}{2(\alpha^2+3\alpha+2)}.$$ Denoting $e_i^j=u^j_i-U^j_i,\; i=1,2,...,M-1$ and $E^j=(e^j_1,e^j_2,...,e^j_{M-1})^T, j=1,2,...,N$, moreover, it is found that
\begin{equation}
\norm{E^j}_h \leq c(\tau^2+h^2),\ 1\leq j\leq N-1.
\end{equation}
\end{theorem}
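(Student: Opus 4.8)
The plan is to prove stability first and then extract the error estimate from it, following the standard template for a Crank--Nicolson scheme whose spatial operator has negative-definite symmetric part. Writing $A=\tfrac{1}{h^\alpha}(c_lB_\lambda+c_rB_\lambda^T)$, the matrix form \eqref{eq:4-c} reads $(I-\tfrac{\tau}{2}A)U^{j+1}=(I+\tfrac{\tau}{2}A)U^{j}+\tau\hat F^{j+\frac12}$. Since $B_\lambda=B_{2,\lambda}$ is real, $A^T=\tfrac{1}{h^\alpha}(c_lB_\lambda^T+c_rB_\lambda)$, so the symmetric part satisfies $A+A^T=\tfrac{c_l+c_r}{h^\alpha}(B_\lambda+B_\lambda^T)$. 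Thus, assuming $c_l,c_r\ge0$ with $c_l+c_r>0$, the entire problem reduces to showing that the symmetric Toeplitz matrix $B_\lambda+B_\lambda^T$ is negative definite.

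The negative definiteness is the main obstacle, and it is exactly here that Lemma~\ref{l3-1} and the two-sided bound $a_1<\gamma_3<a_2$ enter. The matrix $B_\lambda$ is lower-Hessenberg Toeplitz, with diagonal entry $g_{1,\lambda}^{(2,\alpha)}-\phi^{(2,m)}(\lambda)$, single superdiagonal $g_{0,\lambda}^{(2,\alpha)}$, and subdiagonals $g_{2,\lambda}^{(2,\alpha)},g_{3,\lambda}^{(2,\alpha)},\dots$. Its generating symbol is $p(\theta)=g_{0,\lambda}^{(2,\alpha)}e^{\mathi\theta}+(g_{1,\lambda}^{(2,\alpha)}-\phi^{(2,m)}(\lambda))+\sum_{k\ge2}g_{k,\lambda}^{(2,\alpha)}e^{-\mathi(k-1)\theta}$, so that $B_\lambda+B_\lambda^T$ is generated by $2\,\mathrm{Re}\,p(\theta)$. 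By the Grenander--Szeg\H{o} bound, every eigenvalue of the finite truncation lies in the range of the real symbol, so it suffices to prove $2\,\mathrm{Re}\,p(\theta)<0$ for all $\theta$. The sign pattern of Lemma~\ref{l3-1} — namely $g_{1,\lambda}^{(2,\alpha)}\le0$ (hence the diagonal $g_{1,\lambda}^{(2,\alpha)}-\phi^{(2,m)}(\lambda)$ is strictly negative), $g_{0,\lambda}^{(2,\alpha)}+g_{2,\lambda}^{(2,\alpha)}\ge0$, $g_{k,\lambda}^{(2,\alpha)}\ge0$ for $k\ge3$, together with $\sum_k\omega^{(\alpha)}_k=0$ — is precisely what the interval $(a_1,a_2)$ guarantees, and it controls the cosine series $2(g_{1,\lambda}^{(2,\alpha)}-\phi^{(2,m)}(\lambda))+2g_{0,\lambda}^{(2,\alpha)}\cos\theta+2\sum_{k\ge2}g_{k,\lambda}^{(2,\alpha)}\cos((k-1)\theta)$. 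The delicate point is the behaviour near $\theta=0$, where the nonnegative off-diagonal mass competes with the negative diagonal; the upper and lower constraints on $\gamma_3$ are what force the supremum of the symbol to stay strictly below zero.

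Granting $B_\lambda+B_\lambda^T<0$, stability follows by an energy argument. For real $x$ one has $\norm{(I\pm\tfrac{\tau}{2}A)x}^2=\norm{x}^2\pm\tau\,x^TAx+\tfrac{\tau^2}{4}\norm{Ax}^2$, and because $x^TAx=\tfrac12 x^T(A+A^T)x<0$ for $x\neq0$, the matrix $I-\tfrac{\tau}{2}A$ is invertible with $\norm{(I-\tfrac{\tau}{2}A)^{-1}}\le1$, while the Cayley (amplification) matrix $G=(I-\tfrac{\tau}{2}A)^{-1}(I+\tfrac{\tau}{2}A)$ satisfies $\norm{G}\le1$, since $\norm{(I+\tfrac{\tau}{2}A)x}\le\norm{(I-\tfrac{\tau}{2}A)x}$. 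This yields $\norm{U^{j+1}}_h\le\norm{U^{j}}_h+\tau\norm{\hat F^{j+\frac12}}_h$, which is the asserted stability of \eqref{eq:4-c}.

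For convergence, subtract the numerical scheme \eqref{4-7} from the consistency relation \eqref{4-4} satisfied by the exact values. Using $f_i^{j+\frac12}-F_i^{j+\frac12}=O(\tau^2)$ and the local truncation order $O(\tau^3+\tau h^2)$, the error $E^j$ obeys $(I-\tfrac{\tau}{2}A)E^{j+1}=(I+\tfrac{\tau}{2}A)E^{j}+R^{j+1}$ with $\norm{R^{j+1}}_h\le C\tau(\tau^2+h^2)$ and $E^0=0$. Iterating gives $E^{j}=\sum_{l=1}^{j}G^{\,j-l}(I-\tfrac{\tau}{2}A)^{-1}R^{l}$, and bounding with $\norm{G}\le1$ and $\norm{(I-\tfrac{\tau}{2}A)^{-1}}\le1$ produces $\norm{E^{j}}_h\le\sum_{l=1}^{j}\norm{R^{l}}_h\le jC\tau(\tau^2+h^2)$. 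Since $j\le N$ and $N\tau=T$, this is $\norm{E^{j}}_h\le CT(\tau^2+h^2)=c(\tau^2+h^2)$, completing the estimate. The only genuinely hard step is the symbol sign analysis of the second paragraph; the remaining steps are the routine Cayley-transform stability and Gr\"onwall-type summation.
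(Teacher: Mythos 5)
Your architecture is sound, and parts of it are actually \emph{tighter} than what the paper itself records: the paper never proves this second-order theorem (it imports it from \cite{High2016Li}), and its own analogue for the third-order scheme rests on Corollary~\ref{T3-8} and Theorems~\ref{l3-8}--\ref{T3-9}. Your Cayley-transform estimate $\norm{(I-\frac{\tau}{2}A)^{-1}(I+\frac{\tau}{2}A)}\le 1$, derived from $x^TAx<0$, is a genuine norm bound valid for non-normal matrices, whereas Corollary~\ref{T3-8} argues only about eigenvalues of $\mathcal{M}$, which does not by itself control powers of a non-symmetric iteration matrix of growing dimension; likewise your direct summation $\norm{E^j}_h\le\sum_l\norm{R^l}_h$ cleanly replaces the Gronwall step of Theorem~\ref{T3-9}. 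The extra assumption $c_l,c_r\ge0$ is implicit in the paper as well, and for $\lambda_1\neq\lambda_2$ the symmetric part simply splits into $\tfrac{c_l}{2}(B_{\lambda_1}+B_{\lambda_1}^T)+\tfrac{c_r}{2}(B_{\lambda_2}+B_{\lambda_2}^T)$, each summand treated identically, so these are harmless.

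The genuine gap is exactly the step you flag as ``the only genuinely hard step'' and then skip: you never prove that $a_1<\gamma_3<a_2$ makes the symbol negative; you only assert that the sign pattern of Lemma~\ref{l3-1} ``controls'' the cosine series and that the constraints ``force'' its supremum below zero. As written this restates the claim rather than proving it, and the sign conditions alone are \emph{not} sufficient: with $g^{(2,\alpha)}_{1,\lambda}\le0$, $g^{(2,\alpha)}_{0,\lambda}+g^{(2,\alpha)}_{2,\lambda}\ge0$, $g^{(2,\alpha)}_{k,\lambda}\ge0$ $(k\ge3)$, the symbol at $\theta=0$ equals $\sum_{k\le N}g^{(2,\alpha)}_{k,\lambda}-\phi^{(2,3)}(\lambda)$ (finite sum over the weights entering $B_{2,\lambda}$), which could be positive if nothing is known about the size of $\phi^{(2,3)}(\lambda)$. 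The missing ingredient is the identity
\begin{equation*}
\phi^{(2,3)}(\lambda)=\left(\gamma_1e^{h\lambda}+\gamma_2+\gamma_3e^{-h\lambda}\right)(1-e^{-h\lambda})^\alpha
=\sum_{k=0}^{\infty}g^{(2,\alpha)}_{k,\lambda},
\end{equation*}
which follows by applying $\sum_{k\ge0}\omega^{(\alpha)}_kz^k=(1-z)^\alpha$ with $z=e^{-h\lambda}$ to the weights \eqref{eq:2-24}. Granting it, the argument closes in three lines, exactly as in the paper's Theorem~\ref{l3-9} for the third-order case: the $k=1$ term of the cosine series carries $\cos 0=1$, so it contributes $g_{1,\lambda}^{(2,\alpha)}$ exactly and its sign is irrelevant; the $k=0$ and $k=2$ terms both carry $\cos\theta$ and combine to $\bigl(g^{(2,\alpha)}_{0,\lambda}+g^{(2,\alpha)}_{2,\lambda}\bigr)\cos\theta\le g^{(2,\alpha)}_{0,\lambda}+g^{(2,\alpha)}_{2,\lambda}$ --- this pairing is the entire reason Lemma~\ref{l3-1} constrains the \emph{sum} $g_0+g_2$ rather than each weight separately, a point your sketch misses; and the terms $k\ge3$ obey $g_k\cos((k-1)\theta)\le g_k$. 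Hence $\mathrm{Re}\,p(\theta)\le\sum_{k\le N}g^{(2,\alpha)}_{k,\lambda}-\sum_{k=0}^{\infty}g^{(2,\alpha)}_{k,\lambda}=-\sum_{k>N}g^{(2,\alpha)}_{k,\lambda}<0$ uniformly in $\theta$, which is precisely what your Grenander--Szeg\"o step requires. Until this identity and the $k=0$/$k=2$ pairing are supplied, the core assertion of the theorem --- that the stated interval for $\gamma_3$ yields stability --- remains unproven in your write-up.
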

\subsection{The third-order numerical scheme}
In this subsection, we consider third-order accurate scheme for the tempered fractional diffusion equation.
Denote $$g_{k,\lambda}=g^{(3,\alpha)}_{k,\lambda},\ \phi(\lambda)=\phi^{(3,4)}(\lambda)=\left(\gamma_1e^{h\lambda}+\gamma_2+\gamma_3e^{-h\lambda}+\gamma_4e^{-2h\lambda}\right)(1-e^{-h\lambda})^\alpha.$$
Using the tempered-WSGD operators, $_L\mathcal{D}^{\alpha,\lambda}_{h,3}=_L\mathcal{D}_{h,-1,0,1,2}^{\alpha,\gamma_1,\gamma_2,...,\gamma_4}$ and $_R\mathcal{D}^{\alpha,\lambda}_{h,3}=_R\mathcal{D}_{h,-1,0,1,2}^{\alpha,\gamma_1,\gamma_2,...,\gamma_4}$, for the tempered fractional derivatives and the Crank-Nicolson time discretization, we find the following discretization for \eqref{4-0},
\begin{eqnarray}
\label{4-2-1}
u_{i}^{j+1} &-&\frac{\tau}{2}\left[
c_l\left(_L\mathcal{D}^{\alpha,\lambda}_{h,3}u_{i}^{j+1}\right)
+c_r\left(_R\mathcal{D}^{\alpha,\lambda}_{h,3}u_{i}^{j+1}\right)
\right] \nonumber \\
&=&u_{i}^j
+\frac{\tau}{2}\left[
c_l\left(_L\mathcal{D}^{\alpha,\lambda}_{h,3}u_{i}^{j}\right)
+c_r\left(_R\mathcal{D}^{\alpha,\lambda}_{h,3}u_{i}^{j}\right)
\right]+\tau f_i^{j+\frac{1}{2}}+O(\tau^3+\tau h^3).
\end{eqnarray}
This numerical scheme is then written as,
\begin{eqnarray}
\label{4-2-2}
U_{i}^{j+1}
&-&\frac{\tau}{2}\left[
c_l\left(_L\mathcal{D}^{\alpha,\lambda}_{h,3}U_{i}^{j+1}\right)
+c_r\left(_R\mathcal{D}^{\alpha,\lambda}_{h,3}U_{i}^{j+1}\right)
\right]\nonumber\\
&=&U_{i}^j
+\frac{\tau}{2}\left[
c_l\left(_L\mathcal{D}^{\alpha,\lambda}_{h,3}U_{i}^{j}\right)
+c_r\left(_R\mathcal{D}^{\alpha,\lambda}_{h,3}U_{i}^{j}\right)
\right]+\tau F_i^{j+\frac{1}{2}}.
\end{eqnarray}
The matrix form looks as follows,
\begin{equation}
\label{eq:4-2-c}
\begin{aligned}
&\left(I-\frac{\tau}{2h^\alpha}(c_lB_{\lambda}+c_rB_{\lambda}^T)\right)U^{j+1}
=\left(I+\frac{\tau}{2h^\alpha}(c_lB_{\lambda}+c_rB_{\lambda}^T)\right)U^{j}+\tau \hat{F}_i^{j+\frac{1}{2}},
\end{aligned}
\end{equation}
where 
again, $B_{\lambda}=B_{3,\lambda}$, as defined in \eqref{e3-5}.

We will now analyze the stability and convergence of this third-order scheme. 
First of all, we introduce the following lemmas.
\begin{lemma} See, for example,~\cite{QuarteroniNumerical}.\label{l3-2}
A real-valued matrix $A $ of order $n$ is positive definite if and only if its symmetric part, $H=\frac{A+A^T}{2}$, is positive definite; $H$ is positive definite if and only if the eigenvalues of $H$ are positive.
For any eigenvalue $\mu$ of $A$, we have
\begin{equation}
\mu_{min}(H)\leq Re(\mu(A))\leq \mu_{max}(H),
\end{equation}
where $Re(\mu(A))$ represents the real part of $\mu$, and $\mu_{min}(H)$ and $\mu_{max}(H)$ are the minimum and maximum of the eigenvalues of $H$, respectively.
\end{lemma}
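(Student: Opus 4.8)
The plan is to treat the three assertions in turn, each reducing to a Rayleigh-quotient argument for the real symmetric matrix $H$. First I would establish the equivalence between positive definiteness of $A$ and of $H$. The key observation is that for any real vector $x$ the scalar $x^T A^T x$ equals its own transpose $x^T A x$, so that $x^T A x = \frac{1}{2}(x^T A x + x^T A^T x) = x^T H x$. Hence $x^T A x > 0$ for all nonzero real $x$ if and only if $x^T H x > 0$ for all such $x$, which is exactly the claim that $A$ is positive definite if and only if $H$ is.

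Next, since $H$ is real and symmetric, the spectral theorem supplies an orthonormal eigenbasis $\{q_i\}$ with real eigenvalues $\{\lambda_i\}$. Expanding any real $x$ as $x = \sum_i c_i q_i$ gives $x^T H x = \sum_i \lambda_i c_i^2$, which is positive for every nonzero $x$ precisely when each $\lambda_i > 0$; this is the second equivalence. The same expansion yields the Rayleigh bound $\mu_{min}(H)\, x^T x \leq x^T H x \leq \mu_{max}(H)\, x^T x$, which I will reuse in the last step.

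Finally, for the eigenvalue enclosure I would pass to complex vectors. Let $\mu$ be an eigenvalue of $A$ with unit eigenvector $z$ (complex in general, normalized so that $z^* z = 1$), so that $z^* A z = \mu$. Taking real parts and using $A^T = A^*$ for a real matrix, one obtains $Re(\mu) = \frac{1}{2}(z^* A z + z^* A^T z) = z^* H z$, which is real because $H$ is symmetric. Applying the complex analogue of the Rayleigh bound just derived (valid for the Hermitian matrix $H$ and unit $z$) then gives $\mu_{min}(H) \leq z^* H z = Re(\mu) \leq \mu_{max}(H)$, as claimed. I do not expect a genuine obstacle here; the only point demanding care is this transition to complex eigenvectors, where one must verify that $z^* H z$ is real and that the Rayleigh bounds carry over verbatim from the real symmetric to the complex Hermitian setting.
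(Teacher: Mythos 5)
Your proof is correct, but there is nothing in the paper to compare it against: the paper states this lemma without proof, quoting it as a known result from the cited reference (Quarteroni et al.), and then uses it as a black box in the subsequent stability analysis. What you have written is precisely the standard argument that the citation points to, and all three steps are sound. The identity $x^T A x = \tfrac{1}{2}\left(x^T A x + x^T A^T x\right) = x^T H x$ for real $x$ (a scalar equals its own transpose) gives the first equivalence; the spectral theorem for the real symmetric matrix $H$ gives the second, together with the real Rayleigh bounds; and your handling of the one delicate step, the passage to a complex unit eigenvector $z$ of $A$, is correct: since $A$ is real, $A^* = A^T$, so
\begin{equation}
\mathrm{Re}(\mu) = \tfrac{1}{2}\left(z^* A z + \overline{z^* A z}\right) = \tfrac{1}{2}\left(z^* A z + z^* A^T z\right) = z^* H z,
\end{equation}
which is real because $H$ is Hermitian, and the Rayleigh bounds for Hermitian matrices apply to $z^* H z$ verbatim. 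The only point worth making explicit is that the eigenvalues of $H$ viewed as a Hermitian matrix over the complex field coincide with its eigenvalues as a real symmetric matrix, so $\mu_{min}(H)$ and $\mu_{max}(H)$ denote the same quantities in both the real and the complex Rayleigh-quotient arguments.
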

To obtain the stability, we introduce the definition of the  Toeplitz matrix $T_n$ and its generating function $f$.
\begin{definition} See, for example, \cite{CHAN1991Toeplitz}
 Let the Toeplitz matrix $T_n$ be of the following form,
 \begin{equation}
T_n=\left(
\begin{array}{ccccc}
t_0&t_{-1}&\cdots&t_{2-n}&t_{1-n}\\
t_1&t_0&t_{-1}&\cdots&t_{2-n}\\
\vdots&t_1&t_0&\ddots&\vdots\\
t_{n-2}&\cdots&\ddots&\ddots&t_{-1}\\
t_{n-1}&t_{n-2}&\ddots&t_1&t_0
\end{array}\right).
\end{equation}
If the diagonals $\{t_k\}^{n-1}_{ k=-n+1}$ are the Fourier coefficients of a function $f$, i.e.,
\begin{equation}
t_k=\frac{1}{2\pi}\int_{-\pi}^\pi f(x)e^{-ikx}dx,
\end{equation}
then function $f$ is called the generating function of $T_n$.
\end{definition}
\begin{lemma}(Grenander-Szeg\"o Theorem \cite{CHAN1991Toeplitz})\label{l3-6}
For a Toeplitz matrix $T_n$, we denote by $\mu_{min}(T_n)$ and $\mu_{max}(T_n)$ the smallest and largest eigenvalues of $T_n$, respectively. If $f$ is a $2\pi$-periodic continuous real-valued function, defined on $[-\pi,\pi]$, then $$f_{min} \leq \mu_{min}(T_n) \leq \mu_{max}(T_n) \leq f_{max},$$ where $f_{min}$ and $f_{max}$ denote the minimum and maximum values of $f(x)$. Moreover, if $f_{min} < f_{max}$, then all the eigenvalues of $T_n$ satisfy $$f_{min}<\mu(T_n) < f_{max},$$ for all $n > 0$; and furthermore if $f_{min}\geq0$, then $T_n$ is positive definite.

\end{lemma}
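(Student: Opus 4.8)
The plan is to reduce everything to the Rayleigh quotient of $T_n$ and to re-express that quotient as a weighted average of $f$ through the Fourier representation of the matrix entries. First I would note that because $f$ is real-valued its Fourier coefficients satisfy $t_{-k}=\overline{t_k}$, so $T_n$ is Hermitian and therefore has real eigenvalues. By the Rayleigh--Ritz characterization, the extremal eigenvalues are
\[
\mu_{min}(T_n)=\min_{u\neq0}R(u),\qquad \mu_{max}(T_n)=\max_{u\neq0}R(u),\qquad R(u)=\frac{u^{*}T_nu}{u^{*}u},
\]
where the minimum and maximum range over nonzero $u\in\mathbb{C}^n$.

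The key computation is to insert $t_{j-k}=\frac{1}{2\pi}\int_{-\pi}^{\pi}f(x)e^{-i(j-k)x}\,dx$ into $u^{*}T_nu=\sum_{j,k}\overline{u_j}\,t_{j-k}\,u_k$ and interchange the finite sum with the integral, which gives
\[
u^{*}T_nu=\frac{1}{2\pi}\int_{-\pi}^{\pi}f(x)\,\Big|\sum_{k=1}^{n}u_ke^{ikx}\Big|^{2}\,dx.
\]
Writing $p(x)=\sum_{k=1}^{n}u_ke^{ikx}$ and using orthonormality of the exponentials (Parseval), the denominator becomes $u^{*}u=\frac{1}{2\pi}\int_{-\pi}^{\pi}|p(x)|^{2}\,dx$, so that
\[
R(u)=\frac{\int_{-\pi}^{\pi}f(x)\,|p(x)|^{2}\,dx}{\int_{-\pi}^{\pi}|p(x)|^{2}\,dx}
\]
is a weighted average of $f$ against the nonnegative weight $|p(x)|^{2}$. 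Since $f_{min}\le f(x)\le f_{max}$ pointwise, this immediately yields $f_{min}\le R(u)\le f_{max}$ for every nonzero $u$, and taking the minimum and maximum over $u$ delivers the two-sided bound $f_{min}\le\mu_{min}(T_n)\le\mu_{max}(T_n)\le f_{max}$.

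For the strict inequalities under the hypothesis $f_{min}<f_{max}$, I would argue by contradiction. Suppose $R(u)=f_{min}$ for some eigenvector $u$; then $\int_{-\pi}^{\pi}(f(x)-f_{min})\,|p(x)|^{2}\,dx=0$ with a nonnegative integrand. The crucial observation is that $p(x)=e^{ix}q(e^{ix})$, where $q$ is a nonzero polynomial of degree at most $n-1$, so $|p(x)|^{2}$ vanishes only at the finitely many zeros of $q$ and is strictly positive almost everywhere. The vanishing integral then forces $f(x)=f_{min}$ almost everywhere, hence $f\equiv f_{min}$ by continuity, contradicting $f_{min}<f_{max}$. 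The symmetric argument excludes $R(u)=f_{max}$, giving $f_{min}<\mu(T_n)<f_{max}$. Positive definiteness when additionally $f_{min}\ge0$ is then immediate, since every eigenvalue satisfies $\mu>f_{min}\ge0$.

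The only delicate point, and the step I expect to be the main obstacle, is the passage from the weak bounds to the strict ones: it rests on the fact that a nonzero trigonometric polynomial of bounded degree has only finitely many zeros, so that $|p|^{2}$ cannot concentrate its mass on the set where $f=f_{min}$. Everything else is the routine Rayleigh-quotient-plus-Parseval calculation, and the interchange of the (finite) sum and the integral needs no justification beyond linearity.
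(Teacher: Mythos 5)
Your proof is correct: the paper itself gives no proof of this lemma (it is the Grenander--Szeg\"o theorem, imported verbatim from the cited reference \cite{CHAN1991Toeplitz}), and your Rayleigh-quotient argument --- writing $u^{*}T_nu$ as $\frac{1}{2\pi}\int_{-\pi}^{\pi}f(x)|p(x)|^{2}\,dx$ via the Fourier coefficients, bounding the weighted average, and obtaining strictness from the fact that a nonzero trigonometric polynomial has only finitely many zeros --- is precisely the classical proof found in that source. Nothing further is needed.
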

Next, we define the functions,
\begin{equation}
\label{eq:f}
f_B(x)=\sum_{k=0}^{N-1}g_{k,\lambda}e^{i(k-1)x}-\phi(\lambda),\ f_{B^T}(x)=\sum_{k=0}^{N-1}g_{k,\lambda}e^{-i(k-1)x}-\phi(\lambda),
\end{equation}
and
\begin{equation}\label{eq:fh}
f(\alpha,\lambda;x)=\frac{f_B(x)+f_{B^T}(x)}{2}.
\end{equation}
It's straightforward to get the following lemma by Lemmas \ref{l3-2} - \ref{l3-6}.

\begin{lemma}\label{l3-7}
Let the matrices $B_{\lambda}$ and $B_{\lambda}^T$ be given via the numerical scheme \eqref{e3-5}. For $\lambda\geq0,\ h>0$ and $\alpha\in[1,2]$, if we can find (analytically, or with the help of numerical techniques) values of $\gamma_j$ for which the generating functions $f(\alpha,\lambda;x)$ of $B_{\lambda}$ are negative, then the eigenvalues of the matrix $B_{\lambda}$ are negative, too.
\end{lemma}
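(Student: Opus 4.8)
The plan is to reduce the claim about the (in general non-symmetric) Toeplitz matrix $B_\lambda$ to a statement about its symmetric part $H=\frac12(B_\lambda+B_\lambda^T)$, for which the real-valued generating function $f(\alpha,\lambda;x)$ is available and the Grenander--Szeg\"o theorem can be invoked directly.

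First I would confirm that $f_B$ and $f_{B^T}$ of \eqref{eq:f} really are the generating functions of $B_\lambda$ and $B_\lambda^T$. Reading the Toeplitz diagonals off \eqref{e3-5}, the main diagonal carries $t_0=g_{1,\lambda}-\phi(\lambda)$, the single nonzero super-diagonal carries $t_{-1}=g_{0,\lambda}$, and the sub-diagonals carry $t_m=g_{m+1,\lambda}$ for $m\ge1$; assembling $\sum_m t_m e^{imx}$ reproduces $f_B(x)$ exactly, and transposition ($e^{imx}\mapsto e^{-imx}$) yields $f_{B^T}(x)$. Since the weights $g_{k,\lambda}$ and the scalar $\phi(\lambda)$ are real whenever the $\gamma_j$ are real (the $\omega_k^{(\alpha)}$ and $(1-e^{-h\lambda})^\alpha$ being real), we have $f_{B^T}(x)=\overline{f_B(x)}$, so $f(\alpha,\lambda;x)=\tfrac12\big(f_B(x)+f_{B^T}(x)\big)=\mathrm{Re}\,f_B(x)$ is real-valued, $2\pi$-periodic and continuous. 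Because the Toeplitz symbol depends linearly on the matrix, $f(\alpha,\lambda;x)$ is precisely the generating function of the symmetric Toeplitz matrix $H$.

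Next I would apply Lemma \ref{l3-6} to $H$. Under the hypothesis $f(\alpha,\lambda;x)<0$ for all $x\in[-\pi,\pi]$, continuity on the compact interval guarantees that the maximum $f_{max}$ is attained and is strictly negative; hence $\mu_{max}(H)\le f_{max}<0$, and every (real) eigenvalue of the symmetric matrix $H$ is negative, i.e. $H$ is negative definite. Finally, Lemma \ref{l3-2} gives $\mathrm{Re}\big(\mu(B_\lambda)\big)\le \mu_{max}(H)<0$ for every eigenvalue $\mu$ of $B_\lambda$, which is the assertion (read, as is natural for the non-symmetric $B_\lambda$, as ``all eigenvalues have negative real part'').

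I do not expect a genuine obstacle, since the statement is flagged as straightforward; the one point needing care is conceptual rather than computational. One cannot apply Grenander--Szeg\"o to $B_\lambda$ itself, because $B_\lambda$ is non-symmetric and its own generating function $f_B$ is complex-valued, so the real-valued eigenvalue localisation must be routed through the symmetric part $H$. Thus the logical order---identify the generating function of $H$, bound $\mu_{max}(H)$ by $f_{max}<0$, then transfer to $\mathrm{Re}(\mu(B_\lambda))$ via Lemma \ref{l3-2}---is the crux, and everything else is bookkeeping on the Toeplitz symbols.
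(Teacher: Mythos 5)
Your proposal is correct and takes essentially the same route as the paper, whose entire proof is the remark that the lemma follows from Lemmas \ref{l3-2}--\ref{l3-6}: identify $f(\alpha,\lambda;x)$ as the generating function of the symmetric part $H=\frac{1}{2}(B_{\lambda}+B_{\lambda}^T)$, apply the Grenander--Szeg\"o theorem to get $\mu_{max}(H)\leq f_{max}<0$, and transfer to $\mathrm{Re}(\mu(B_{\lambda}))$ via Lemma \ref{l3-2}. Your explicit verification of the Toeplitz symbols and your reading of ``negative'' as ``negative real part'' (necessary since $B_{\lambda}$ is non-symmetric) simply make precise what the paper leaves implicit.
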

\begin{corollary}\label{T3-8}
If, for $1<\alpha<2$, the generating functions $f(\alpha,\lambda;x)$, given in \eqref{eq:fh}, are negative, the numerical scheme \eqref{4-2-2} is stable.
\end{corollary}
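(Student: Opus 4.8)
The plan is to recast scheme \eqref{4-2-2} in its matrix form \eqref{eq:4-2-c}, reduce stability to a negative‑definiteness statement about the symmetrized spatial matrix, and then close with the standard energy estimate for the Crank--Nicolson (Cayley) transform. Abbreviate $s=\tau/(2h^\alpha)>0$ and $M=c_lB_\lambda+c_rB_\lambda^T$, so that one time step reads $(I-sM)U^{j+1}=(I+sM)U^{j}+\tau\hat F^{j+1/2}$. Stability will follow once I establish that the amplification matrix $G=(I-sM)^{-1}(I+sM)$ satisfies $\norm{G}_2\le 1$, since then any perturbation of the data propagates without growth.

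First I would pin down the symmetric part of $M$. Because $B_\lambda$ and $B_\lambda^T$ are transposes of each other, $H:=\tfrac12(M+M^T)=\tfrac12(c_l+c_r)\bigl(B_\lambda+B_\lambda^T\bigr)$ is symmetric Toeplitz, and its generating function is exactly $f_B(x)+f_{B^T}(x)=2f(\alpha,\lambda;x)$ by \eqref{eq:f}--\eqref{eq:fh}; here $f$ is real‑valued because $f_{B^T}=\overline{f_B}$ (the weights $g_{k,\lambda}$ and $\phi(\lambda)$ being real). Under the hypothesis $f(\alpha,\lambda;x)<0$ on $[-\pi,\pi]$, Lemma \ref{l3-6} bounds every eigenvalue of $B_\lambda+B_\lambda^T$ above by $2f_{max}<0$, so $B_\lambda+B_\lambda^T$ is negative definite; and, provided $c_l+c_r>0$ (the relevant diffusion regime), so is $H$. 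This is precisely the symmetrized counterpart of Lemma \ref{l3-7}. In particular Lemma \ref{l3-2} gives $\mathrm{Re}\,\mu(M)\le\mu_{max}(H)<0$, whence $1-s\mu\ne 0$ for every eigenvalue $\mu$ of $M$ and $I-sM$ is invertible.

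The core step is the energy identity: for any real vector $v$,
\begin{equation*}
\norm{(I+sM)v}_2^2-\norm{(I-sM)v}_2^2=2s\,v^T(M+M^T)v=4s\,v^THv\le 0,
\end{equation*}
since $H$ is negative definite and $s>0$. Because $I-sM$ and $I+sM$ are polynomials in $M$ they commute, so $G=(I+sM)(I-sM)^{-1}$; substituting $v=(I-sM)^{-1}u$ into the identity yields $\norm{Gu}_2\le\norm{u}_2$ for all $u$, i.e. $\norm{G}_2\le 1$. As this bound is independent of $\tau$ and $h$, the scheme is unconditionally stable: a data perturbation $\delta^0$ evolves as $\delta^{j}=G^{j}\delta^0$ with $\norm{\delta^{j}}_2\le\norm{\delta^0}_2$.

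I expect the only delicate point to be the bookkeeping that identifies the correct object: one must verify that it is the generating function of the symmetrized matrix $B_\lambda+B_\lambda^T$ (namely $2f$) that controls $H$, rather than that of $B_\lambda$ alone, and that $f$ is genuinely real‑valued so that the Grenander--Szeg\"o bound of Lemma \ref{l3-6} applies. Everything that follows --- invertibility of $I-sM$ and the norm bound on the Cayley transform --- is the routine computation displayed above.
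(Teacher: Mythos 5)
Your proposal is correct, and its setup coincides with the paper's: both pass to the matrix form, define $\mathcal{M}=\frac{\tau}{2h^\alpha}(c_lB_{\lambda}+c_rB_{\lambda}^T)$, observe that its symmetric part is $\frac{\tau}{4h^\alpha}(c_l+c_r)(B_{\lambda}+B_{\lambda}^T)$ whose generating function is (a positive multiple of) $f(\alpha,\lambda;x)$, and invoke the Grenander--Szeg\"o bound of Lemma \ref{l3-6}. Where you genuinely diverge is the last step. The paper stays spectral: by Lemma \ref{l3-2} every eigenvalue $\mu(\mathcal{M})$ has negative real part, so each eigenvalue $\frac{1+\mu(\mathcal{M})}{1-\mu(\mathcal{M})}$ of the amplification matrix $(I-\mathcal{M})^{-1}(I+\mathcal{M})$ has modulus below $1$, and stability is declared. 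You instead prove the energy identity $\norm{(I+s M)v}_2^2-\norm{(I-s M)v}_2^2=4s\,v^THv\le 0$ and conclude $\norm{G}_2\le 1$ for the Cayley transform. This buys something real: since $\mathcal{M}$ is non-normal, the paper's argument only controls the spectral radius of $G$, which by itself does not give a bound on $\norm{G^n}$ that is uniform in the matrix dimension (i.e.\ in $h$); your contraction bound in the $2$-norm does, so your proof is the more rigorous closure of the same outline, at the cost of a few extra lines. Two small points of bookkeeping: you rightly make explicit the sign hypothesis $c_l+c_r>0$, which the paper leaves implicit (it only assumes $c_l+c_r\neq0$); and your observation that invertibility of $I-s\mathcal{M}$ follows from $\mathrm{Re}\,\mu(\mathcal{M})<0$ could be obtained even more directly from your own identity, since $\norm{(I-sM)v}_2^2=\norm{v}_2^2-4s\,v^THv+s^2\norm{Mv}_2^2\ge\norm{v}_2^2$ for all $v$.
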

\begin{proof}
Let $\mathcal{M}=\frac{\tau}{2h^\alpha}(c_lB_{\lambda}+c_rB_{\lambda}^T)$. We find,
\begin{equation}
\begin{aligned}
\frac{\mathcal{M}+\mathcal{M}^T}{2}
=&\frac{\tau}{4h^\alpha}(c_l(B_{\lambda}+B^T_{\lambda})+c_r(B_{\lambda}+B_{\lambda}^T))
\end{aligned}
\end{equation}

With $\mu(\mathcal{M})$ an eigenvalue of matrix $\mathcal{M}$, it follows that $\mu(\mathcal{M})<0$ when $f(\alpha,\lambda;x)<0$ by Lemmas \ref{l3-2}-\ref{l3-6}. Then $\frac{1+\mu(\mathcal{M})}{1-\mu(\mathcal{M})}<1$ is an eigenvalue of matrix $|\mathrm{I}-\mathcal{M}|^{-1}|\mathrm{I}+\mathcal{M}|$. Hence, the numerical scheme \eqref{4-2-2} is stable.
\end{proof}

For $\alpha\in(1,2)$, we denote $$a_3=max\{\frac{\frac{\alpha^5}{8}+\frac{7}{12}\alpha^4-\frac{5}{8}\alpha^3-\frac{49}{12}\alpha^2+3\alpha}{\alpha^3+6\alpha^2+11\alpha+6},\ \frac{\frac{\alpha^5}{8} + \frac{\alpha^4}{3} - \frac{ 67}{24}\alpha^3 -\frac{ 23}{6}\alpha^2 +\frac{175}{6}\alpha - 30}{\alpha^3+6\alpha^2+11\alpha+6}\},$$
and $$a_4=min\{ \frac{\frac{1}{8}\alpha^4+\frac{7}{12}\alpha^3+\frac{1}{8}\alpha^2-\frac{13}{6}\alpha}{\alpha^2+5\alpha+8},\ \frac{\frac{\alpha^5}{8}+\frac{11}{24}\alpha^4-\frac{41}{24}\alpha^3 -\frac{107}{24}\alpha^2+\frac{163}{12}\alpha-8}{\alpha^3+6\alpha^2+11\alpha+6} \}.$$ Impact of varying $a_3$ and $a_4$ is illustrated in Figure \ref{fig:1}. It can be seen that when $\alpha \in (1.26,1.71)$, $a_3< a_4$ and $(a_3,a_4)\neq\emptyset$.
\begin{figure}[h]
\begin{center}
\includegraphics[width=11.5cm,height=8cm]{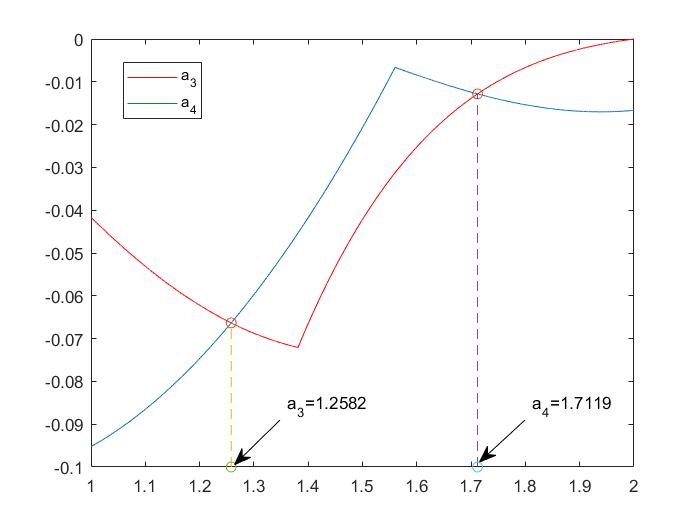}
\end{center}
\caption{$a_3$, $a_4$ with $\alpha\in(1,2)$}
\label{fig:1}
\end{figure}

For $\alpha\in(1.26,1.71)$, we obtain the following result, similar to Lemma \ref{l3-1}.
\begin{theorem}\label{l3-8}
For $\alpha\in(1.26,1.71)$, $\lambda\geq0$ and $a_3\leq\gamma_4\leq a_4$, then there exists
$g_{1,\lambda}\leq0,\ g_{0,\lambda}+g_{2,\lambda}\geq0,\ g_{k,\lambda}\geq0(k\geq3).$
\end{theorem}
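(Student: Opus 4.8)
The plan is to eliminate $\gamma_1,\gamma_2,\gamma_3$ through the linear system preceding \eqref{2-26-1} and reduce each of the three claims to an inequality in the single free parameter $\gamma_4$, showing that the admissible set is exactly $[a_3,a_4]$. First I would observe from \eqref{eq:2-28} that, for $k\ge3$,
\[
g_{k,\lambda}=e^{-(k-1)h\lambda}\,c_k,\qquad c_k:=\gamma_1\omega_k^{(\alpha)}+\gamma_2\omega_{k-1}^{(\alpha)}+\gamma_3\omega_{k-2}^{(\alpha)}+\gamma_4\omega_{k-3}^{(\alpha)},
\]
and $g_{1,\lambda}=c_1$, so that every sign condition except the compound one is independent of $h,\lambda$ and amounts to the sign of $c_k$. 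I would use throughout the facts from part~1 of Lemma~\ref{l3-1} ($\omega_0^{(\alpha)}=1$, $\omega_1^{(\alpha)}=-\alpha$, $\omega_k^{(\alpha)}>0$ decreasing for $k\ge2$, $\sum_k\omega_k^{(\alpha)}=0$) together with the ratio recurrence $\omega_k^{(\alpha)}=\frac{k-1-\alpha}{k}\,\omega_{k-1}^{(\alpha)}$, whose ratios $r_k:=\frac{k-1-\alpha}{k}$ lie in $(0,1)$ and increase to $1$ for $k\ge3$.

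The workhorse for the finitely many conditions is the generating-function identity $(-1+3z-3z^2+z^3)(1-z)^\alpha=-(1-z)^{\alpha+3}$, which shows that the coefficient of $\gamma_4$ in $c_k$ is $-\omega_k^{(\alpha+3)}$. Since $\alpha+3\in(4.26,4.71)$, this coefficient is positive for $k=3$ and for every $k\ge5$, but negative for $k=4$. Hence $c_3\ge0$ and $c_5\ge0$ furnish lower bounds on $\gamma_4$, while $c_4\ge0$ furnishes an upper bound; computing these affine functions identifies the two lower bounds with the arguments of the $\max$ defining $a_3$ and the $c_4$-bound with the cubic-denominator argument of $a_4$. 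For the compound condition I would write $g_{0,\lambda}+g_{2,\lambda}=\gamma_1e^{h\lambda}+(\gamma_1\omega_2^{(\alpha)}-\alpha\gamma_2+\gamma_3)e^{-h\lambda}$ and first establish that $\gamma_1>0$, $\gamma_2>0$, $\gamma_3<0$ on the window; positivity of $\gamma_1$ makes this a function of $s=e^{h\lambda}\ge1$ that is minimal at $s=1$, so the condition reduces to its value at $\lambda=0$, namely $\gamma_1(1+\omega_2^{(\alpha)})-\alpha\gamma_2+\gamma_3\ge0$, whose $\gamma_4$-coefficient is $-\tfrac12(\alpha^2+5\alpha+8)$; this yields the remaining (quadratic-denominator) argument of $a_4$. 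Finally, $c_1\le0$ turns out to bound $\gamma_4$ above only by a quantity exceeding $a_4$, so it is automatically satisfied and non-binding.

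It remains to control the infinitely many conditions $c_k\ge0$ for $k\ge6$. Here I would factor, for $k\ge5$,
\[
c_k=\omega_{k-3}^{(\alpha)}\,\psi_k,\qquad \psi_k:=\gamma_1 r_k r_{k-1}r_{k-2}+\gamma_2 r_{k-1}r_{k-2}+\gamma_3 r_{k-2}+\gamma_4,
\]
with $\omega_{k-3}^{(\alpha)}>0$, and rewrite $\psi_k=1-\gamma_1(1-r_k r_{k-1}r_{k-2})-\gamma_2(1-r_{k-1}r_{k-2})-\gamma_3(1-r_{k-2})$ using $\sum_j\gamma_j=1$. Because $r_j\uparrow1$, the first two bracketed terms decrease to $0$ and, since $\gamma_1,\gamma_2>0$, contribute increasingly, while the $\gamma_3$ term (with $\gamma_3<0$) contributes a positive, decreasing amount; the aim is to show the former dominate so that $\psi_k$ is increasing in $k$ and hence $\psi_k\ge\psi_5=c_5/\omega_2^{(\alpha)}\ge0$ once $\gamma_4\ge a_3$. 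Combined with the explicit checks at $k=1,3,4,5$ and the compound condition, this gives all three claims on $[a_3,a_4]$, and comparing $a_3$ with $a_4$ shows the interval is nonempty exactly for $\alpha\in(1.26,1.71)$ (Figure~\ref{fig:1}).

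The hard part is precisely this last uniform-in-$k$ step: the monotonicity of $\psi_k$ rests on a competition between the increasing positive contributions of $\gamma_1,\gamma_2$ and the decreasing positive contribution carried by the negative $\gamma_3$, and it holds only because the magnitudes forced by $\alpha\in(1.26,1.71)$ and $\gamma_4\in[a_3,a_4]$ keep $\gamma_1,\gamma_2$ dominant over $\abs{\gamma_3}$. I expect this estimate to require either a careful sign analysis of the increments $\psi_{k+1}-\psi_k$ (expanded through $r_{j+1}-r_j=\frac{1+\alpha}{j(j+1)}$) or, as the numerical tone surrounding Lemma~\ref{l3-7} suggests, a partial numerical verification over the parameter window.
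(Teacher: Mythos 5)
Your overall route is the same as the paper's: eliminate $\gamma_1,\gamma_2,\gamma_3$ through the linear relations, turn every sign requirement into an affine inequality in $\gamma_4$, check $g_{1,\lambda}$, $g_{0,\lambda}+g_{2,\lambda}$, $g_{3,\lambda}$, $g_{4,\lambda}$ explicitly, and reduce the infinite family $k\ge5$ to the single condition at $k=5$ by a monotonicity-in-$k$ argument. Your matching of the individual constraints to the arguments of $a_3$ and $a_4$, and your observation that the $g_{1,\lambda}\le0$ constraint is non-binding, agree with the paper; the generating-function identity $(-1+3z-3z^2+z^3)(1-z)^\alpha=-(1-z)^{\alpha+3}$, which identifies the $\gamma_4$-coefficient of $c_k$ as $-\omega_k^{(\alpha+3)}$, is a tidy bookkeeping device the paper does not have.

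The genuine gap is exactly the step you flag as hard and propose to settle, possibly, by numerics: the uniform-in-$k$ tail estimate. The paper closes it analytically with a reformulation you should adopt in place of the $\psi_k$-increment competition: multiply $\psi_k$ by $k(k-1)(k-2)$ to get
\[
g_{k,\lambda}=\frac{\omega_{k-3}^{(\alpha)}e^{-(k-1)h\lambda}}{k(k-1)(k-2)}\Bigl((\alpha^3+6\alpha^2+11\alpha+6)\gamma_4-g(k)\Bigr),\qquad k\ge5,
\]
where $g(x)=-x^3+\bigl(\tfrac{\alpha^2}{2}+\tfrac{5}{2}\alpha+5\bigr)x^2-\bigl(\tfrac{\alpha^4}{8}+\tfrac{13}{12}\alpha^3+\tfrac{31}{8}\alpha^2+\tfrac{83}{12}\alpha+6\bigr)x+\bigl(\tfrac{\alpha^5}{8}+\tfrac{23}{24}\alpha^4+\tfrac{21}{8}\alpha^3+\tfrac{73}{24}\alpha^2+\tfrac{5}{4}\alpha\bigr)$ is an explicit cubic. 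The whole tail is then equivalent to $\gamma_4\ge\sup_{k\ge5}g(k)/(\alpha^3+6\alpha^2+11\alpha+6)$, and monotonicity becomes an elementary calculus fact: $g'$ is a downward parabola with vertex abscissa $(\alpha^2+5\alpha+10)/6<5$ and $g'(5)<0$ for $\alpha\in(1.26,1.71)$, so $g$ decreases on $[5,\infty)$ and the supremum is $g(5)$, which is precisely the second argument of $a_3$. No numerical verification and no increment estimates are needed. (Your route can also be completed: the increment works out to $\psi_{k+1}-\psi_k=\frac{1+\alpha}{k-2}\bigl[3\gamma_1 r_kr_{k-1}/(k+1)+2\gamma_2 r_{k-1}/k+\gamma_3/(k-1)\bigr]$, and on the parameter window the $\gamma_2$ term alone dominates $\abs{\gamma_3}/(k-1)$; but establishing that requires explicit bounds on $\gamma_2$ and $\gamma_3$, which is strictly more work than the cubic.) Separately, one side-claim is false as stated: $\gamma_3<0$ does not hold on the whole window; near $\alpha=1.26$ with $\gamma_4\approx a_3$ one gets $\gamma_3=\frac{\alpha^2}{8}-\frac{7}{24}\alpha-3\gamma_4>0$. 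This is harmless for the mathematics—when $\gamma_3\ge0$ all three subtracted terms in your rewriting of $\psi_k$ increase with $k$, so monotonicity is automatic—but your case analysis, and the "competition" narrative built on $\gamma_3<0$, must be amended to cover it.
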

\begin{proof}
For the term $g_{0,\lambda}+g_{2,\lambda}$, we have
\begin{equation}
\begin{aligned}
g_{0,\lambda}+g_{2,\lambda}
=&\gamma_1\omega_0^{(\alpha)}e^{h\lambda}+\left(\gamma_1\omega_2^{(\alpha)}+\gamma_2\omega_1^{(\alpha)}+\gamma_3\omega_0^{(\alpha)}\right)e^{-h\lambda}\\
=&\left(\frac{\alpha^2}{8}+\frac{5}{24}\alpha-\gamma_4\right)e^{h\lambda}
+\left(\frac{\alpha(\alpha-1)}{2}\left(\frac{\alpha^2}{8}+\frac{5}{24}\alpha-\gamma_4\right)\right.\\
&\left.
-\alpha\left(-\frac{\alpha^2}{4}+\frac{1}{12}\alpha+1+3\gamma_4\right)
+\left(\frac{\alpha^2}{8}-\frac{7}{24}\alpha-3\gamma_4\right)\right)e^{-h\lambda}\\
\geq&\left(\left(\frac{\alpha^2}{8}+\frac{5}{24}\alpha-\gamma_4\right)
+\frac{\alpha(\alpha-1)}{2}\left(\frac{\alpha^2}{8}+\frac{5}{24}\alpha-\gamma_4\right)\right.\\
&\left.
-\alpha\left(-\frac{\alpha^2}{4}+\frac{1}{12}\alpha+1+3\gamma_4\right)
+\left(\frac{\alpha^2}{8}-\frac{7}{24}\alpha-3\gamma_4\right)\right)e^{-h\lambda}\\
=&\left(\left(\frac{\alpha^4}{16}+\frac{7}{24}\alpha^3+\frac{\alpha^2}{16}-\frac{13}{12}\alpha\right)
-\left(\frac{\alpha^2}{2}+\frac{5}{2}\alpha+4\right)\gamma_4\right)e^{-h\lambda}.
\end{aligned}
\end{equation}
If $\gamma_4\leq\left(\frac{1}{8}\alpha^4+\frac{7}{12}\alpha^3+\frac{1}{8}\alpha^2-\frac{13}{6}\alpha\right)/(\alpha^2+5\alpha+8)$, it is immediate that $g_{0,\lambda}+g_{2,\lambda}\geq0$.

For the term $g_{1,\lambda}$, it is found that
\begin{equation}
\begin{aligned}
g_{1,\lambda}=&\gamma_1\omega_1^{(\alpha)}+\gamma_2\omega_0^{(\alpha)}
=-\alpha\left(\frac{\alpha^2}{8}+\frac{5}{24}\alpha-\gamma_4\right)+\left(-\frac{\alpha^2}{4}+\frac{1}{12}\alpha+1+3\gamma_4\right)\\
=&-\frac{\alpha^3}{8}-\frac{11}{24}\alpha^2+\frac{\alpha}{12}+1+(3+\alpha)\gamma_4.
\end{aligned}
\end{equation}
When $\gamma_4\leq\left(\frac{1}{8}\alpha^4+\frac{7}{12}\alpha^3+\frac{1}{8}\alpha^2-\frac{13}{6}\alpha\right)/(\alpha^2+5\alpha+8)$, $g_{1,\lambda}\leq0$.

For the term $g_{3,\lambda}$, it follows that,
\begin{equation}
\begin{aligned}
g_{3,\lambda}
=&\left(\gamma_1\omega_{3}^{(\alpha)}+\gamma_2\omega_{2}^{(\alpha)}
+\gamma_3\omega_{1}^{(\alpha)}+\gamma_4\omega_{0}^{(\alpha)}\right)e^{-2h\lambda}\\
=&\left(\left(\frac{\alpha^2}{8}+\frac{5}{24}\alpha-\gamma_4\right)\frac{\alpha(\alpha-1)(2-\alpha)}{6}
 +\left(-\frac{\alpha^2}{4}+\frac{\alpha}{12}+1+3\gamma_4\right)\frac{\alpha(\alpha-1)}{2}\right.\\
 &\left.
 -\left(\frac{\alpha^2}{8}-\frac{7}{24}\alpha-3\gamma_4\right)\alpha+\gamma_4
 \right)e^{-2h\lambda}\\
=&\left(-\left(\frac{\alpha^5}{48}+\frac{7}{72}\alpha^4-\frac{5}{48}\alpha^3-\frac{49}{72}\alpha^2+\frac{1}{2}\alpha\right)+\left(\frac{\alpha^3}{6}+\alpha^2+\frac{11}{6}\alpha+1\right)\gamma_4\right)e^{-2h\lambda}.
\end{aligned}
\end{equation}
If $\gamma_4\geq\left(\frac{\alpha^5}{8}+\frac{7}{12}\alpha^4-\frac{5}{8}\alpha^3-\frac{49}{12}\alpha^2+3\alpha\right)/
\left(\alpha^3+6\alpha^2+11\alpha+6\right)$, we find  $g_{3,\lambda}\geq0$.

For the term $g_{4,\lambda}$, we derive
\begin{equation}
\begin{aligned}
g_{4,\lambda}
=&\left(\gamma_1\omega_{4}^{(\alpha)}+\gamma_2\omega_{3}^{(\alpha)}+\gamma_3\omega_{2}^{(\alpha)}+\gamma_4\omega_{1}^{(\alpha)}\right)e^{-3h\lambda}\\
=&\left(\left(\frac{\alpha^2}{8}+\frac{5}{24}\alpha-\gamma_4\right)\frac{(3-\alpha)(2-\alpha)(1-\alpha)}{24}
 +\left(-\frac{\alpha^2}{4}+\frac{\alpha}{12}+1+3\gamma_4\right)\frac{(2-\alpha)(1-\alpha)}{6}\right.\\
 &\left.
 +\left(\frac{\alpha^2}{8}-\frac{7}{24}\alpha-3\gamma_4\right)\left(\frac{1-\alpha}{2}\right)
 +\gamma_4\right)\omega_{1}^{(\alpha)}e^{-3h\lambda}\\
=&-\frac{\alpha}{24}\left(-\left(\frac{\alpha^5}{8}+\frac{11}{24}\alpha^4-\frac{41}{24}\alpha^3
-\frac{107}{24}\alpha^2+\frac{163}{12}\alpha-8\right)
+\left(\alpha^3+6\alpha^2+11\alpha+6\right)\gamma_4\right)e^{-3h\lambda}.
\end{aligned}
\end{equation}
If $\gamma_4\leq\left(\frac{\alpha^5}{8}+\frac{11}{24}\alpha^4-\frac{41}{24}\alpha^3
-\frac{107}{24}\alpha^2+\frac{163}{12}\alpha-8\right)/\left(\alpha^3+6\alpha^2+11\alpha+6\right)$, then it is straightforward to find $g_{4,\lambda}\geq0$.

For the term $g_{k,\lambda},\; k\geq5$, we finally find,
\begin{equation}
\begin{aligned}
g_{k,\lambda}
=&\left(\gamma_1\omega_{k}^{(\alpha)}+\gamma_2\omega_{k-1}^{(\alpha)}+\gamma_3\omega_{k-2}^{(\alpha)}+\gamma_4\omega_{k-3}^{(\alpha)}\right)e^{-(k-1)h\lambda}\\
=&\left(\left(\frac{\alpha^2}{8}+\frac{5}{24}\alpha-\gamma_4\right)\left(\frac{k-1-\alpha}{k}\right)\left(\frac{k-2-\alpha}{k-1}\right)\left(\frac{k-3-\alpha}{k-2}\right)\right.\\
 &\left. +\left(-\frac{\alpha^2}{4}+\frac{\alpha}{12}+1+3\gamma_4\right)\left(\frac{k-2-\alpha}{k-1}\right)\left(\frac{k-3-\alpha}{k-2}\right)\right.\\
 &\left.
 +\left(\frac{\alpha^2}{8}-\frac{7}{24}\alpha-3\gamma_4\right)\left(\frac{k-3-\alpha}{k-2}\right)
 +\gamma_4\right)\omega_{k-3}^{(\alpha)}e^{-(k-1)h\lambda}\\
=&\frac{\omega_{k-3}^{(\alpha)}e^{-(k-1)h\lambda}}{k(k-1)(k-2)}\left(k^3-\left(\frac{\alpha^2}{2}+\frac{5}{2}\alpha+5\right)k^2
+\left(\frac{\alpha^4}{8}+\frac{13}{12}\alpha^3+\frac{31}{8}\alpha^2+\frac{83}{12}\alpha+6\right)k\right.\\
&\left.-\left(\frac{\alpha^5}{8}+\frac{23}{24}\alpha^4+\frac{21}{8}\alpha^3+\frac{73}{24}\alpha^2+\frac{5}{4}\alpha\right)
+\left(\alpha^3+6\alpha^2+11\alpha+6\right)\gamma_4\right).
\end{aligned}
\end{equation}
If, $$\gamma_4\geq\frac{-k^3+\left(\frac{\alpha^2}{2}+\frac{5}{2}\alpha+5\right)k^2
-\left(\frac{\alpha^4}{8}+\frac{13}{12}\alpha^3+\frac{31}{8}\alpha^2+\frac{83}{12}\alpha+6\right)k
+\left(\frac{\alpha^5}{8}+\frac{23}{24}\alpha^4+\frac{21}{8}\alpha^3+\frac{73}{24}\alpha^2+\frac{5}{4}\alpha\right)}
{\alpha^3+6\alpha^2+11\alpha+6},$$
we can find $g_{k,\lambda}\geq0$, as $\omega_{k-3}^{(\alpha)}>0,\; k\geq5$.

Next we consider $$\frac{-k^3+\left(\frac{\alpha^2}{2}+\frac{5}{2}\alpha+5\right)k^2
-\left(\frac{\alpha^4}{8}+\frac{13}{12}\alpha^3+\frac{31}{8}\alpha^2+\frac{83}{12}\alpha+6\right)k
+\left(\frac{\alpha^5}{8}+\frac{23}{24}\alpha^4+\frac{21}{8}\alpha^3+\frac{73}{24}\alpha^2+\frac{5}{4}\alpha\right)}
{\alpha^3+6\alpha^2+11\alpha+6},$$
and analyze the function,
 \begin{eqnarray}
  g(x)&=&-x^3+\left(\frac{\alpha^2}{2}+\frac{5}{2}\alpha+5\right)x^2-\left(\frac{\alpha^4}{8}+\frac{13}{12}\alpha^3+\frac{31}{8}\alpha^2+\frac{83}{12}\alpha+6\right)x\nonumber\\
  &+&\left(\frac{\alpha^5}{8}+\frac{23}{24}\alpha^4+\frac{21}{8}\alpha^3+\frac{73}{24}\alpha^2+\frac{5}{4}\alpha\right).
 \end{eqnarray}
Function $g(x)$ is monotonically decreasing as the variable $x\ (x\geq5)$ for $1.26<\alpha<1.71.$ When $\alpha^3+6\alpha^2+11\alpha+6\geq0$, we have
\begin{eqnarray}
\frac{g(x)}{\alpha^3+6\alpha^2+11\alpha+6}
&\leq&\frac{g(5)}{\alpha^3+6\alpha^2+11\alpha+6}\nonumber\\
&=&\frac{\frac{1}{8}\alpha^5 + \frac{1}{3} \alpha^4- \frac{ 67}{24}\alpha^3 - \frac{ 23}{6}\alpha^2 +\frac{175}{6}\alpha - 30}{\alpha^3+6\alpha^2+11\alpha+6}.
\end{eqnarray}
\end{proof}

We then analyze the generating functions $f(\alpha,\lambda;x)$ of $H$ given in \eqref{eq:fh}.
\begin{theorem}\label{l3-9}
Let the matrices $B_{\lambda}$ and $B_{\lambda}^T$ be given by \eqref{e3-5}. For $\lambda\geq0,\ h>0$ and $\alpha\in(1.26,1.71)$, $f(\alpha;x)$ is the generating function of $H=\frac{B_{\lambda}+B_{\lambda}^T}{2}$, if $\gamma_4\in(a_3,a_4)$, we have $f(\alpha;x)<0$ and $B_{\lambda}$ is negative.
\end{theorem}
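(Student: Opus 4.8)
The plan is to convert the claim into the coefficient-sign information already secured in Theorem~\ref{l3-8} and then invoke the Grenander--Szeg\"o machinery packaged in Lemma~\ref{l3-7}. First I would rewrite the symbol $f(\alpha,\lambda;x)=\tfrac12(f_B+f_{B^T})$ of \eqref{eq:f}--\eqref{eq:fh} as a pure cosine series,
\begin{equation}
f(\alpha,\lambda;x)=\sum_{k=0}^{N-1}g_{k,\lambda}\cos\big((k-1)x\big)-\phi(\lambda),
\end{equation}
which is legitimate since $f_B$ and $f_{B^T}$ carry conjugate exponentials. Grouping the two terms producing $\cos x$ (namely $k=0$ and $k=2$) and isolating the constant term $k=1$ gives
\begin{equation}
f(\alpha,\lambda;x)=g_{1,\lambda}+\big(g_{0,\lambda}+g_{2,\lambda}\big)\cos x+\sum_{k=3}^{N-1}g_{k,\lambda}\cos\big((k-1)x\big)-\phi(\lambda).
\end{equation}

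Next I would feed in the signs from Theorem~\ref{l3-8}: for $\alpha\in(1.26,1.71)$ and $\gamma_4\in(a_3,a_4)$ we have $g_{1,\lambda}\le0$, $g_{0,\lambda}+g_{2,\lambda}\ge0$, and $g_{k,\lambda}\ge0$ for $k\ge3$. Since every coefficient except the constant one is nonnegative, bounding each cosine by $\cos\theta\le1$ yields $f(\alpha,\lambda;x)\le\sum_{k=0}^{N-1}g_{k,\lambda}-\phi(\lambda)=f(\alpha,\lambda;0)$; the constant term requires no estimate because $\cos 0=1$ exactly. Thus the symbol attains its maximum at $x=0$, and the $\cos x$ contribution already forces a strict drop for $x\neq0$ once $g_{0,\lambda}+g_{2,\lambda}>0$, so $f$ is non-constant and $f_{\min}<f_{\max}$.

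The decisive step is to control $f(\alpha,\lambda;0)$. Using the Gr\"unwald generating identity $\sum_{m\ge0}\omega_m^{(\alpha)}z^m=(1-z)^\alpha$ with $z=e^{-h\lambda}$ and collecting the definitions \eqref{eq:2-28} by $\gamma_j$, I would establish the consistency relation
\begin{equation}
\sum_{k=0}^{\infty}g_{k,\lambda}=\big(\gamma_1e^{h\lambda}+\gamma_2+\gamma_3e^{-h\lambda}+\gamma_4e^{-2h\lambda}\big)(1-e^{-h\lambda})^\alpha=\phi(\lambda),
\end{equation}
so that $f(\alpha,\lambda;0)=-\sum_{k\ge N}g_{k,\lambda}\le0$, the tail being nonnegative by the $k\ge3$ sign. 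Hence $f_{\max}\le0$ with equality, if any, only at $x=0$, giving $f(\alpha,\lambda;x)<0$ for $x\neq0$.

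Finally I would close the argument exactly as Lemma~\ref{l3-7} prescribes. Grenander--Szeg\"o (Lemma~\ref{l3-6}), applied to the Toeplitz matrix $H=\tfrac12(B_\lambda+B_\lambda^T)$ with symbol $f$, forces every eigenvalue of $H$ to lie strictly below $f_{\max}\le0$ because $f_{\min}<f_{\max}$; thus $H$ is negative definite, and Lemma~\ref{l3-2} converts this into $\mathrm{Re}(\mu(B_\lambda))\le\mu_{\max}(H)<0$, i.e.\ $B_\lambda$ is negative. Stability of \eqref{4-2-2} then follows from Corollary~\ref{T3-8}. I expect the main obstacle to be the evaluation of $f(\alpha,\lambda;0)$: the apparent concern of whether the maximum is strictly negative is resolved precisely by the weight-consistency identity together with the nonnegative tail, so that the boundary value at $x=0$ does not spoil the eigenvalue conclusion.
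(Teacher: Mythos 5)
Your proof is correct and follows essentially the same route as the paper's: expand the symbol as a cosine series, use the sign conditions of Theorem \ref{l3-8} to bound $f(\alpha,\lambda;x)$ by its value at $x=0$, identify $\phi(\lambda)$ with $\sum_{k\ge 0}g_{k,\lambda}$ so that the maximum equals minus a nonnegative tail, and conclude negative definiteness of $H$ and negativity of $B_\lambda$ via Lemmas \ref{l3-6} and \ref{l3-2}. The only differences are refinements rather than a different approach: you derive the weight-sum identity explicitly from the Gr\"unwald generating function where the paper invokes it silently, you handle strictness through the $f_{\min}<f_{\max}$ clause of the Grenander--Szeg\"o theorem, and you omit the paper's preliminary computation that $\phi(\lambda)>0$, which indeed plays no role in the final estimate.
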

\begin{proof}
We consider the function $\phi(\lambda)$, and obtain,
\begin{equation}
\begin{aligned}
\phi(\lambda)=&
\left(\gamma_1e^{h\lambda}+\gamma_2+\gamma_3e^{-h\lambda}+\gamma_4e^{-2h\lambda}\right)(1-e^{-h\lambda})^\alpha\\
=&\left((\frac{\alpha^2}{8}+\frac{5}{24}\alpha-\gamma_4)e^{h\lambda}
+(-\frac{\alpha^2}{4}+\frac{1}{12}\alpha+1+3\gamma_4)
+(\frac{\alpha^2}{8}-\frac{7}{24}\alpha-3\gamma_1)e^{-h\lambda}
+\gamma_4e^{-2h\lambda}\right)(1-e^{-h\lambda})^\alpha\\
=&\left(\frac{\alpha^2}{8}(e^{h\lambda}+e^{-h\lambda}-2)
+\frac{\alpha}{24}(5e^{h\lambda}-7e^{-h\lambda}+2)+1
+\gamma_4(-e^{h\lambda}+3-3e^{-h\lambda}+e^{-2h\lambda})
\right)(1-e^{-h\lambda})^\alpha\\
\geq&\gamma_4\left(-e^{h\lambda}+3-3e^{-h\lambda}+e^{-2h\lambda}
\right)(1-e^{-h\lambda})^\alpha.
\end{aligned}
\end{equation}
Denoting $h(x)=-e^{x}+3-3e^{-x}+e^{-2x}$, we obtain $h(x)\leq h(0)=0$, as $h(x)$ is monotonically decreasing when $x\geq0$. It is found that $\phi(\lambda)>0 $ as $\gamma_4<0$.

By Theorem \ref{l3-8}, we obtain
\begin{equation}
\begin{aligned}
f(\alpha;x)=&\sum_{k=0}^{N}g_{k,\lambda}^{\alpha}\cos(k-1)x-\phi(\lambda)
\leq\sum_{k=0}^{N}g_{k,\lambda}^{\alpha}-\phi(\lambda)\\
=&\sum_{k=0}^{N}g_{k,\lambda}^{\alpha}-\sum_{k=0}^{+\infty}g_{k,\lambda}^{\alpha}
<0.
\end{aligned}
\end{equation}
By Lemma \ref{l3-6}, we see that $B_{\lambda}$ is negative.

\end{proof}

For any $\alpha \in (1.26,1.71)$, if $\gamma_4\in(a_3,a_4)$, the numerical scheme \eqref{4-2-2} will be unconditionally stable by Corollary \ref{T3-8} and Theorem \ref{l3-9}. If $\gamma_4\notin(a_3,a_4)$, we can also obtain a stable numerical scheme \eqref{4-2-2} whenever $f(\alpha,\lambda;x)<0$. This condition can be evaluated by numerical examples in Section \ref{sec:example}.

In fact, $\forall\ \alpha\in(1,2)$, if a $\gamma_i$ exists which satisfies $f(\alpha,\lambda;x)<0$ for certain $N>0$, the numerical scheme \eqref{4-2-2} will be stable. The numerical examples in Section \ref{sec:example} confirm this. We will check whether $\gamma_4$ can be obtained so that $f(\alpha,\lambda;x)<0$ by a numerical experiment in Figures \ref{fig:3-1-1} and \ref{fig:3-1-2}. Choosing $\alpha=1.2\in(1,1.26)$ in Figure \ref{fig:3-1-1} and $\alpha=1.8\in(1.71,2)$ in Figure \ref{fig:3-1-2}, it is found that for two values of $\alpha\notin(1.26,1.71),$ $\gamma_4$ can be numerically found so that $f(\alpha,\lambda;x)<0$ for certain $N$-values.
\begin{figure}[!htpb]
\begin{center}
    \includegraphics[scale=0.22]{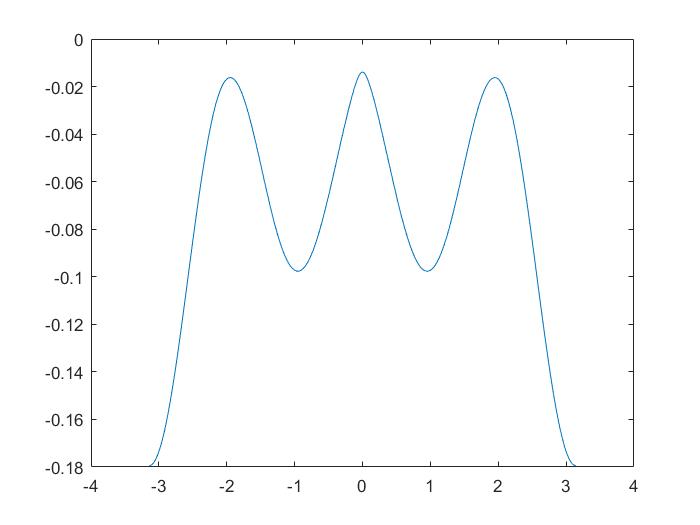}\hspace{-0.5cm}
    \includegraphics[scale=0.22]{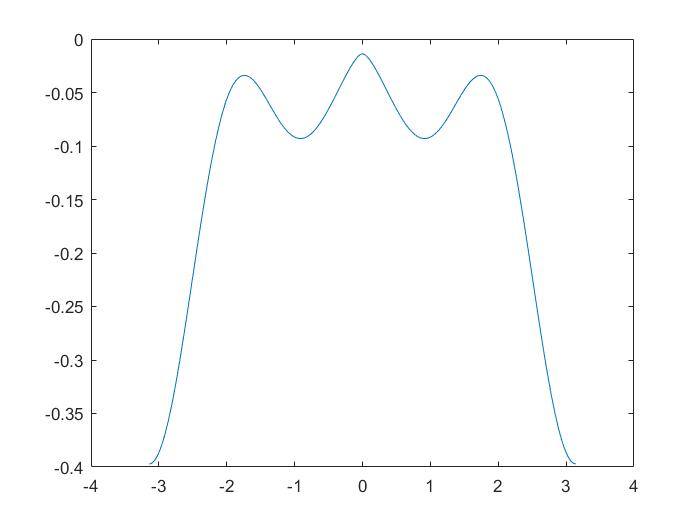}\hspace{-0.5cm}
    \includegraphics[scale=0.22]{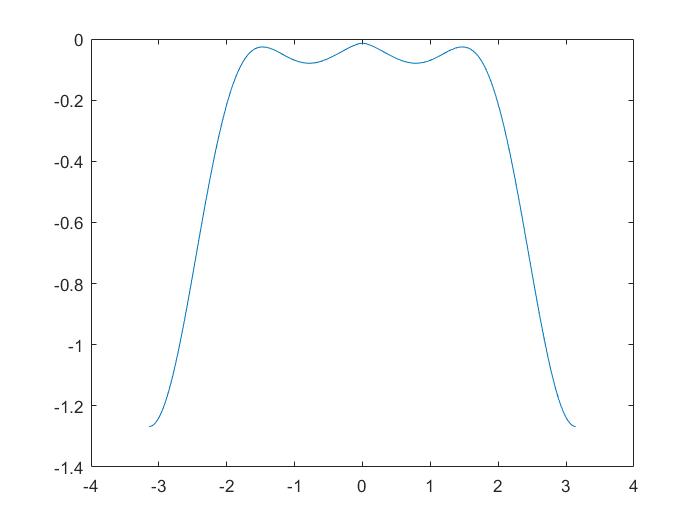}\hspace{-0.5cm}\\
    $\gamma_4=-0.075$\hspace{3.65cm}$\gamma_4=-0.1$\hspace{3.65cm} $\gamma_4=-0.2$
\end{center}
\caption{Function $f(\alpha;x)$ with $\alpha=1.2$, $\lambda=2$ and $N=250$}
\label{fig:3-1-1}
\end{figure}
\begin{figure}[!htpb]
\begin{center}
    \includegraphics[scale=0.22]{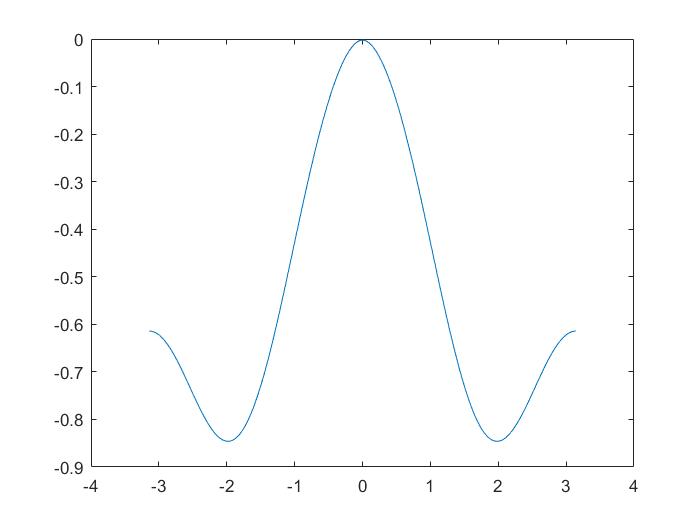}\hspace{-0.5cm}
    \includegraphics[scale=0.22]{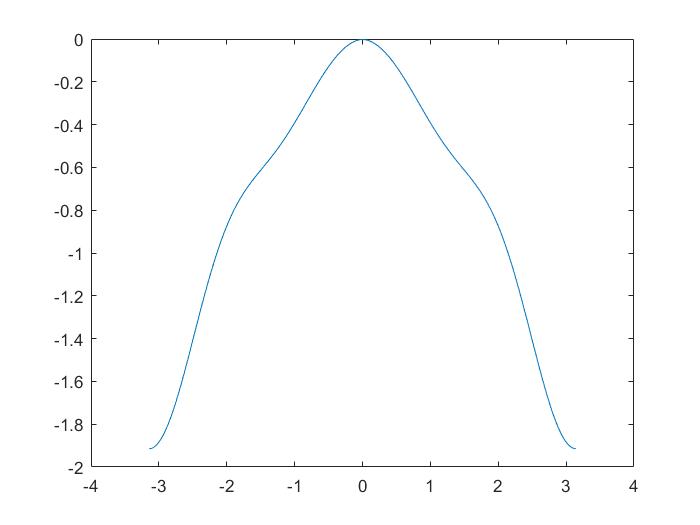}\hspace{-0.5cm}
    \includegraphics[scale=0.22]{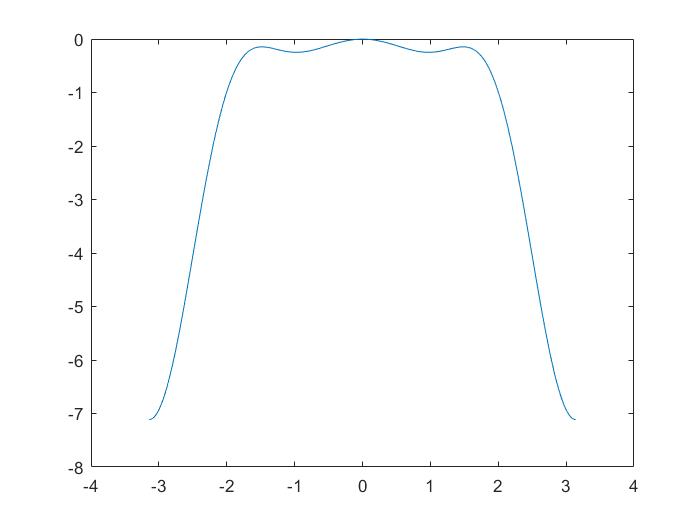}\hspace{-0.5cm}\\
    $\gamma_4=0$\hspace{3.5cm}$\gamma_4=-0.1$\hspace{3.5cm} $\gamma_4=-0.5$
\end{center}
\caption{Function $f(\alpha;x)$ with $\alpha=1.8$, $\lambda=2$ and $N=250$}
\label{fig:3-1-2}
\end{figure}



Error estimates for the fully discrete scheme \eqref{4-2-2} are based on the following lemma.
\begin{lemma}\label{l-11}(Discrete Gronwall's Inequation, see, for example, \cite{Alfio1997Num})
Assume that $\{k_n\}$ and $\{p_n\}$ are nonnegative sequences, and the sequence $\{ \phi\}$ satisfies
$$\phi_0\leq g_0,\ \phi_n\leq g_0+\sum_{l=0}^{n-1}p_l+\sum_{l=0}^{n-1}k_l\phi_l, \ n\geq1,$$
where $g_0\leq0.$ Then, the sequence $\{ \phi\}$ satisfies
$$\phi_n\leq\left(g_0+\sum_{l=1}^{n-1}p_l\right)\exp\left(\sum_{l=1}^{n-1}k_l\right),\ n\geq1.$$
\end{lemma}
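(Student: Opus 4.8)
The plan is to prove the estimate directly by the standard majorizing-sequence (comparison) argument, rather than merely citing \cite{Alfio1997Num}. First I would promote the right-hand side of the hypothesis to an auxiliary sequence
$$R_n = g_0 + \sum_{l=0}^{n-1} p_l + \sum_{l=0}^{n-1} k_l \phi_l \quad (n\geq 1), \qquad R_0 = g_0,$$
so that the assumption reads simply $\phi_n \leq R_n$ for every $n\geq 0$. Because $p_n,k_n\geq 0$ and $\phi_n\leq R_n$, computing the forward difference gives
$$R_{n+1} - R_n = p_n + k_n \phi_n \leq p_n + k_n R_n,$$
so the majorant satisfies the first-order linear recursion $R_{n+1} \leq (1+k_n)R_n + p_n$. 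This converts the implicit integral-type inequality into an explicit one-step recurrence, which is the conceptual heart of the proof.

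Next I would solve this recurrence by unrolling it from $R_0 = g_0$. A short induction on $n$ yields
$$R_n \leq g_0 \prod_{l=0}^{n-1}(1+k_l) + \sum_{m=0}^{n-1} p_m \prod_{l=m+1}^{n-1}(1+k_l).$$
Since each factor obeys the elementary convexity bound $1+k_l \leq e^{k_l}$ (valid as $k_l\geq 0$), and every partial product over a sub-range is dominated by the full product $\prod_{l}(1+k_l)\leq \exp\!\big(\sum_l k_l\big)$, I can factor this common exponential out of both terms. Recombining the $g_0$ contribution with the $p_m$-sum and invoking $\phi_n\leq R_n$ then gives
$$\phi_n \leq \left(g_0 + \sum_{l} p_l\right)\exp\left(\sum_{l} k_l\right),$$
which is precisely the asserted bound. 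Every step uses only nonnegativity of $\{p_n\}$ and $\{k_n\}$ together with $1+x\leq e^x$, so the analytical content is light.

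The one genuinely delicate point is the bookkeeping of the summation indices: the hypothesis accumulates $p_l$ and $k_l$ starting from $l=0$, whereas the stated conclusion runs both sums from $l=1$. I would reconcile this by absorbing the $l=0$ contribution either into the $g_0$ term or into the leading factor $(1+k_0)$, and I expect that landing \emph{exactly} on the displayed constant—rather than on a harmlessly larger one—is where the care is required. I would also read the hypothesis $g_0\leq 0$ as a typographical slip for $g_0\geq 0$, since nonnegativity of $g_0$ is what makes $R_0=g_0$ a legitimate nonnegative majorant and keeps the monotonicity steps valid; with that correction the argument is entirely routine beyond the index accounting.
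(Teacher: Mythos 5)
Your argument is correct, and it is worth noting at the outset that the paper itself contains no proof of this lemma: it is quoted (misprints included) from~\cite{Alfio1997Num}, so the only meaningful comparison is with the standard textbook argument --- which is precisely what you reproduce: promote the right-hand side to a majorant $R_n$, derive the one-step recursion $R_{n+1}\leq(1+k_n)R_n+p_n$, unroll it by induction, and bound the resulting products via $1+x\leq e^x$. Two remarks on the points you flagged as delicate. First, you are right that $g_0\leq 0$ must be read as $g_0\geq 0$: nonnegativity of $g_0$ is genuinely used in your final step, since $g_0\prod_{l=0}^{n-1}(1+k_l)\leq g_0\exp\bigl(\sum_{l=0}^{n-1}k_l\bigr)$ fails for $g_0<0$; the cited reference assumes $g_0\geq0$, and the application in Theorem~\ref{T3-9} uses a positive $g_0$. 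Second, you should not attempt to ``land exactly'' on the displayed constant, because the conclusion with sums starting at $l=1$ is false as written: for $n=1$ it asserts $\phi_1\leq g_0$ (both sums being empty), whereas the hypothesis only yields $\phi_1\leq g_0+p_0+k_0\phi_0$, and taking $\phi_0=g_0=p_0=k_0=1$, $\phi_1=3$ violates the claim. The lower limits are simply typos for $l=0$; that is the version your unrolling proves, the version stated in~\cite{Alfio1997Num}, and the version actually needed in the error estimate of Theorem~\ref{T3-9}. So your proof is the right one, and no index bookkeeping can (or should) recover the misprinted form.
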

\begin{theorem}\label{T3-9}
Let's denote by $e_i^j=u^j_i-U^j_i, i=1,2,...,M-1,$ and $E^j=(e^j_1,e^j_2,...,e^j_{M-1})^T, j=1,2,...,N$. With solutions $u^j_i$ and $U^j_i$ of Equations \eqref{4-2-1} and \eqref{4-2-2}, respectively, we have, for $1<\alpha<2$, if $f(\alpha,\lambda_i;x)<0$, $i=1,2$,
\begin{equation}
\norm{E^j}_h \leq c(\tau^2+h^3),\ 1\leq j\leq N-1.
\end{equation}
\end{theorem}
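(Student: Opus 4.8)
The plan is to derive an error equation for $E^j$ having exactly the same structure as the scheme \eqref{4-2-2} but driven by the local truncation error, and then to run a discrete energy estimate whose operator term is annihilated by the negative-definiteness established in the stability analysis. Subtracting \eqref{4-2-2} from the consistency relation \eqref{4-2-1} and writing $\mathcal{M}=\frac{\tau}{2h^\alpha}(c_lB_\lambda+c_rB_\lambda^T)$, the error vector satisfies $(I-\mathcal{M})E^{j+1}=(I+\mathcal{M})E^{j}+\tau R^{j+\frac12}$, where $R^{j+\frac12}$ collects the truncation terms $O(\tau^3+\tau h^3)$ divided by $\tau$, so that $\norm{R^{j+\frac12}}_h\le c(\tau^2+h^3)$; here I use that the solutions of \eqref{4-2-1} and \eqref{4-2-2} share the same initial and boundary data, whence $E^0=0$ and the boundary contributions in $\hat F$ cancel.

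Next I would recast this in increment form $E^{j+1}-E^{j}=\mathcal{M}(E^{j+1}+E^{j})+\tau R^{j+\frac12}$ and take the discrete inner product $\langle\cdot,\cdot\rangle_h$ associated with $\norm{\cdot}_h$ against $E^{j+1}+E^{j}$. The left-hand side telescopes into $\norm{E^{j+1}}_h^2-\norm{E^{j}}_h^2$. For the operator term, the antisymmetric part of $\mathcal{M}$ contributes nothing to the real quadratic form, so only the symmetric part $H=\tfrac{\mathcal{M}+\mathcal{M}^T}{2}=\frac{\tau(c_l+c_r)}{4h^\alpha}(B_\lambda+B_\lambda^T)$ survives. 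By Corollary \ref{T3-8} and Theorem \ref{l3-9}, under $f(\alpha,\lambda_i;x)<0$ the generating function of $B_\lambda+B_\lambda^T$ is negative, so $H$ is negative definite and $\langle\mathcal{M}(E^{j+1}+E^{j}),E^{j+1}+E^{j}\rangle_h\le0$; this term is discarded. The remaining term is handled by Cauchy--Schwarz followed by Young's inequality, giving $\tau\langle R^{j+\frac12},E^{j+1}+E^{j}\rangle_h\le c\tau\norm{R^{j+\frac12}}_h^2+\tfrac{\tau}{2}(\norm{E^{j+1}}_h^2+\norm{E^{j}}_h^2)$.

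Collecting terms yields a recursion of the form $\norm{E^{j+1}}_h^2\le(1+c\tau)\norm{E^{j}}_h^2+c\tau\norm{R^{j+\frac12}}_h^2$, after absorbing the $\tfrac{\tau}{2}\norm{E^{j+1}}_h^2$ contribution on the left for $\tau$ sufficiently small. I would then apply the discrete Gronwall inequality, Lemma \ref{l-11}, with $\phi_j=\norm{E^j}_h^2$, $p_l=c\tau\norm{R^{l+\frac12}}_h^2$ and $k_l=c\tau$, together with $E^0=0$, $\sum_l k_l\le cT$ and $\sum_l p_l\le cT\max_l\norm{R^{l+\frac12}}_h^2\le cT\,(\tau^2+h^3)^2$. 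Taking square roots gives $\norm{E^j}_h\le c(\tau^2+h^3)$ for $1\le j\le N-1$, as claimed.

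The main obstacle is justifying that the operator term is genuinely nonpositive for the combined, non-symmetric operator $c_lB_\lambda+c_rB_\lambda^T$: since the left and right discretizations differ, $\mathcal{M}$ is not symmetric, and I must pass through its symmetric part as in Lemma \ref{l3-2} and invoke the negativity of the averaged generating function $f(\alpha,\lambda;x)$ for \emph{both} tempering parameters $\lambda_1,\lambda_2$. This requires $c_l,c_r\ge0$ with $c_l+c_r>0$, so that $H=\frac{\tau(c_l+c_r)}{4h^\alpha}(B_\lambda+B_\lambda^T)$ inherits the negative definiteness of $B_\lambda+B_\lambda^T$; the hypothesis $f(\alpha,\lambda_i;x)<0$, $i=1,2$, is precisely what makes this step work, and it is the same condition that guarantees stability in Corollary \ref{T3-8}.
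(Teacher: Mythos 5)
Your proposal is correct and follows essentially the same route as the paper's own proof: the same error equation $(I-\mathcal{M})E^{j+1}=(I+\mathcal{M})E^{j}+\tau\rho^j$, the same energy estimate obtained by testing against $E^{j+1}+E^{j}$, discarding the operator term via the negative definiteness of the symmetric part of $\mathcal{M}$ (Corollary \ref{T3-8}/Theorem \ref{l3-9}), then Young's inequality and the discrete Gronwall lemma with $E^0=0$. The only cosmetic difference is that you close a one-step recursion (absorbing $\tfrac{\tau}{2}\norm{E^{j+1}}_h^2$ for small $\tau$) before invoking Gronwall, whereas the paper first sums the telescoping inequality over all time levels; your explicit remarks on the two tempering parameters and the sign condition $c_l,c_r\geq0$ actually make a hypothesis precise that the paper leaves implicit.
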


\begin{proof}
Combining \eqref{4-2-1} and \eqref{4-2-2}, gives us,
\begin{equation}
\begin{aligned}
&e_{i}^{j+1}
-\frac{\tau}{2}\left[
c_l\cdot~ _L\mathcal{D}^{\alpha,\lambda_1}_{h,3}e_{i}^{j+1}
+c_r\cdot~_R\mathcal{D}^{\alpha,\lambda_2}_{h,3}e_{i}^{j+1}
\right]\\
&\quad=e_{i}^j+\frac{\tau}{2}\left[
c_l\cdot~ _L\mathcal{D}^{\alpha,\lambda_1}_{h,3}e_{i}^{j}
+c_r\cdot~ _R\mathcal{D}^{\alpha,\lambda_2}_{h,3}e_{i}^{j}
\right]
+\tau\rho_i^j,
\end{aligned}
\end{equation}
where $\rho_i^j=O(\tau^2+ h^3)$.
In matrix form, this is given by
\begin{equation}
\label{4-20}
\begin{aligned}
(I-\mathcal{M})E^{j+1}
=(I+\mathcal{M})E^j
+\tau\rho^j,
\end{aligned}
\end{equation}
where $\rho^j=(\rho^j_1,\rho^j_2,...,\rho^j_{M-1})^T.$\\
After multiplication on both sides of Equation \eqref{4-20} by $(E^{j+1}+E^{j})^T$, we have
\begin{equation}
\begin{aligned}
(E^{j+1}+E^{j})^T(E^{j+1}-E^{j})
-(E^{j+1}+E^{j})^T\mathcal{M}(E^{j+1}+E^{j})=\tau(E^{j+1}+E^{j})^T\rho^j&.
\end{aligned}
\end{equation}
By Theorem \ref{T3-8}, it is known that $\mathcal{M}$ is negative. So, we obtain,
\begin{equation}
\begin{aligned}
(E^{j+1}+E^{j})^T\mathcal{M}(E^{j+1}+E^{j})<0.
\end{aligned}
\end{equation}
Hence, we obtain
\begin{equation}
\begin{aligned}
(E^{j+1}+E^{j})^T(E^{j+1}-E^{j})
&=\sum_{i=1}^{M-1}((e_{i}^{j+1})^2-(e_{i}^{j})^2)\\
\leq\tau(E^{j+1}+E^{j})^T\rho^j&=\tau\sum_{i=1}^{M-1}(e_{i}^{j+1}+e_{i}^{j})\rho^j_i.
\end{aligned}
\end{equation}
Summing up for all $j\in[0,n-1]$, we conclude that
\begin{equation}
\begin{aligned}
\sum_{i=1}^{M-1}(e_{i}^{n})^2
&\leq\tau\sum_{j=0}^{n-1}\sum_{i=1}^{M-1}(e_{i}^{j+1}+e_{i}^{j})\rho^j_i\\
&=\tau\sum_{j=1}^{n-1}\sum_{i=1}^{M-1}(\rho_{i}^{j}+\rho_{i}^{j-1})e^j_i+\tau\sum_{i=1}^{M-1}e_{i}^{n}\rho_i^{n-1}\\
&\leq\frac{\tau}{2}\sum_{j=0}^{n-1}\sum_{i=1}^{M-1}(e^j_i)^2
+\frac{\tau}{2}\sum_{j=0}^{n-1}\sum_{i=1}^{M-1}(\rho_{i}^{j}+\rho_{i}^{j-1})^2
+\frac{1}{2}\sum_{i=1}^{M-1}(e_{i}^{n})^2+\frac{1}{2}\sum_{i=1}^{M-1}(\tau\rho_i^{n-1})^2.
\end{aligned}
\end{equation}
As $\rho_i^j=O(\tau^2+ h^3)$, we have the following result by the discrete Gronwall's inequality \eqref{l-11},
\begin{equation}
\begin{aligned}
\norm{E^{n}}^2\leq\tau\sum_{j=1}^{n-1}
\norm{E^{j}}^2+c(\tau^2+ h^3)^2
\leq e^Tc(\tau^2+ h^3)^2
\leq C(\tau^2+ h^3)^2.
\end{aligned}
\end{equation}
\end{proof}

\section{Numerical examples}\label{sec:example}
In this section, we present some numerical results for several experiments, on tempered fractional derivatives in example \ref{e1}, the tempered fractional diffusion equations in example \ref{e2} and the tempered fractional Black--Scholes equation in example \ref{e3}-\ref{e4}, to verify the theoretical results.
\subsection{The tempered fractional derivatives}
In this subsection, we take the second-order operators \eqref{2-22-1}-\eqref{2-23-1} and third-order operators \eqref{2-26-1}-\eqref{2-27-1} for the left and right tempered fractional derivatives to test the accuracy of the tempered-WSGD operators.
\begin{example}\label{e1}
In the example, we choose $\alpha=0.6$ and $\alpha=1.6$, and consider different $\lambda$ in the interval $[0,1]$ for the left and right tempered fractional derivatives. 
\begin{enumerate}
  \item We analyze the schemes for the following left tempered fractional derivative, $$_0\mathfrak{D}_x^{\alpha,\lambda}(e^{-\lambda x}x^{3+\alpha})=e^{-\lambda x}x^3\left(\frac{\Gamma(4+\alpha)}{6}-\lambda^\alpha x^{\alpha}\right),$$ which is discretized by the second-order operator \eqref{2-22-1} and the third-order scheme \eqref{2-26-1}, respectively. Tables \ref{tab:4-1-1} and \ref{tab:4-1-2} show the corresponding $L^2$ errors and the orders of accuracy for different $\lambda$-values, with $\alpha=0.6,$ and $\alpha=1.6$, $\gamma_3=0.001$ for the second-order operator, and $\gamma_4=0.001$ for the third-order operator. 
      The results confirm the desired accuracy.
\begin{table}[!ht]\centering
\begin{tabular}{c c c c c c c c}
\toprule
&   &  \multicolumn{2}{c}{$\lambda=0$}    & \multicolumn{2}{c}{$\lambda=1$}        & \multicolumn{2}{c}{$\lambda=3$}   \\
\cmidrule{3-8}
$\alpha$&   h &      \multicolumn{1}{c}{$L^{2}$ Error } &  \multicolumn{1}{c}{Order}   & \multicolumn{1}{c}{$L^{2}$ Error }  & \multicolumn{1}{c}{Order}  &  \multicolumn{1}{c}{$L^{2}$ Error}  & \multicolumn{1}{c}{Order}\\
\midrule
            &	$1/10\ $&	1.05e-02	&		&	4.87e-03	&		&	1.30e-03	&		\\
            &	$1/20\ $&	2.58e-03	&	2.03	&	1.21e-03	&	2.01	&	3.25e-04	&	1.99	\\
$\alpha=0.6$&	$1/40\ $&	6.39e-04	&	2.01	&	3.02e-04	&	2.00	&	8.15e-05	&	2.00	\\
            &	$1/80\ $&	1.59e-04	&	2.01	&	7.55e-05	&	2.00	&	2.04e-05	&	2.00	\\
            &	$1/160 $&	3.96e-05	&	2.00	&	1.89e-05	&	2.00	&	5.11e-06	&	2.00	\\
\cmidrule{1-8}
            &$1/10\ $&	5.29e-02	&		&	2.50e-02	&		&	7.02e-03	&		\\
            &$1/20\ $&	1.32e-02	&	2.01	&	6.33e-03	&	1.98	&	1.83e-03	&	1.94	\\
$\alpha=1.6$&$1/40\ $&	3.29e-03	&	2.00	&	1.59e-03	&	1.99	&	4.68e-04	&	1.97	\\
            &$1/80\ $&	8.21e-04	&	2.00	&	4.00e-04	&	1.99	&	1.18e-04	&	1.98	\\
            &$1/160 $&	2.05e-04	&	2.00	&	1.00e-04	&	2.00	&	2.97e-05	&	1.99	\\

\bottomrule
\end{tabular}
\caption{$L^2$ errors and orders of accuracy for $_0\mathfrak{D}_x^{\alpha,\lambda}(e^{-\lambda x}x^{3+\alpha})$ by the second-order operator \eqref{2-22-1} for different $\lambda$ with $\gamma_3=0.001$.}
\label{tab:4-1-1}
\end{table}

\begin{table}[!ht]\centering
\begin{tabular}{c c c c c c c c}
\toprule
&   &  \multicolumn{2}{c}{$\lambda=0$}    & \multicolumn{2}{c}{$\lambda=1$}        & \multicolumn{2}{c}{$\lambda=3$}   \\
\cmidrule{3-8}
$\alpha$&   h &      \multicolumn{1}{c}{$L^{2}$ Error } &  \multicolumn{1}{c}{Order}   & \multicolumn{1}{c}{$L^{2}$ Error }  & \multicolumn{1}{c}{Order}  &  \multicolumn{1}{c}{$L^{2}$ Error}  & \multicolumn{1}{c}{Order}\\
\midrule
            &	$1/10\ $&	6.62e-04	&		&	4.09e-04	&		&	2.20e-04	&		\\
            &	$1/20\ $&	8.48e-05	&	2.96	&	5.38e-05	&	2.92	&	3.08e-05	&	2.84	\\
$\alpha=0.6$&	$1/40\ $&	1.07e-05	&	2.98	&	6.91e-06	&	2.96	&	4.06e-06	&	2.92	\\
            &	$1/80\ $&	1.35e-06	&	2.99	&	8.75e-07	&	2.98	&	5.22e-07	&	2.96	\\
            &	$1/160 $&	1.69e-07	&	3.00	&	1.10e-07	&	2.99	&	6.62e-08	&	2.98	\\

\cmidrule{1-8}
&$1/10\ $&	5.79e-03	&		&	3.54e-03	&		&	1.84e-03	&		\\
&$1/20\ $&	7.47e-04	&	2.95	&	4.73e-04	&	2.90	&	2.66e-04	&	2.79	\\
$\alpha=1.6$&$1/40\ $&	9.49e-05	&	2.98	&	6.12e-05	&	2.95	&	3.59e-05	&	2.89	\\
&$1/80\ $&	1.20e-05	&	2.99	&	7.78e-06	&	2.98	&	4.66e-06	&	2.95	\\
&$1/160 $&	1.50e-06	&	2.99	&	9.82e-07	&	2.99	&	5.93e-07	&	2.97	\\

\bottomrule
\end{tabular}
\caption{$L^2$ errors and orders of accuracy for $_0\mathfrak{D}_x^{\alpha,\lambda}(e^{-\lambda x}x^{3+\alpha})$ by the third-order operator \eqref{2-26-1} for different $\lambda$ with $\gamma_4=0.001$.}
\label{tab:4-1-2}
\end{table}
\item We also consider the right tempered fractional derivative, $$_x\mathfrak{D}_1^{\alpha,\lambda}(e^{\lambda x}(1-x)^{3+\alpha})=
e^{\lambda x}(1-x)^3\left(\frac{\Gamma(4+\alpha)}{6}-\lambda^\alpha(1-x)^{\alpha}\right),$$ computed by the second-order operator \eqref{2-23-1} and the third-order operators \eqref{2-27-1}, respectively. Tables \ref{tab:4-1-3} and \ref{tab:4-1-4} show the $L^2$ errors and orders of accuracy for different $\lambda$-values, with $\gamma_3=-0.001$ for the second-order and $\gamma_4=-0.001$ for the third-order operators. 
The results clearly confirm the desired discretization accuracy.
\begin{table}[!ht]\centering
\begin{tabular}{c c c c c c c c}
\toprule
&   &  \multicolumn{2}{c}{$\lambda=0$}    & \multicolumn{2}{c}{$\lambda=1$}        & \multicolumn{2}{c}{$\lambda=3$}   \\
\cmidrule{3-8}
$\alpha$&   h &      \multicolumn{1}{c}{$L^{2}$ Error } &  \multicolumn{1}{c}{Order}   & \multicolumn{1}{c}{$L^{2}$ Error }  & \multicolumn{1}{c}{Order}  &  \multicolumn{1}{c}{$L^{2}$ Error}  & \multicolumn{1}{c}{Order}\\
\midrule
&	$1/10\ $&	8.94e-03	&		&	1.21e-02	&		&	2.57e-02	&		\\
&	$1/20\ $&	2.36e-03	&	1.92	&	3.13e-03	&	1.95	&	6.44e-03	&	2.00	\\
$\alpha=0.6$&	$1/40\ $&	6.06e-04	&	1.96	&	7.94e-04	&	1.98	&	1.61e-03	&	2.00	\\
&	$1/80\ $&	1.54e-04	&	1.98	&	2.00e-04	&	1.99	&	4.04e-04	&	2.00	\\
&	$1/160 $&	3.87e-05	&	1.99	&	5.03e-05	&	1.99	&	1.01e-04	&	2.00	\\

\cmidrule{1-8}
&$1/10\ $&	4.49e-02	&		&	6.17e-02	&		&	1.40e-01	&		\\
&$1/20\ $&	1.21e-02	&	1.90	&	1.63e-02	&	1.92	&	3.63e-02	&	1.94	\\
$\alpha=1.6$&$1/40\ $&	3.12e-03	&	1.95	&	4.19e-03	&	1.96	&	9.27e-03	&	1.97	\\
&$1/80\ $&	7.95e-04	&	1.97	&	1.06e-03	&	1.98	&	2.34e-03	&	1.98	\\
&$1/160 $&	2.00e-04	&	1.99	&	2.67e-04	&	1.99	&	5.89e-04	&	1.99	\\

\bottomrule
\end{tabular}
\caption{$L^2$ errors and orders of accuracy for $_x\mathfrak{D}_1^{\alpha,\lambda}(e^{\lambda x}(1-x)^{3+\alpha})$ by the second-order operator \eqref{2-23-1} for different $\lambda$ with $\gamma_3=-0.001$.}
\label{tab:4-1-3}
\end{table}
\begin{table}[!ht]\centering
\begin{tabular}{c c c c c c c c}
\toprule
&   &  \multicolumn{2}{c}{$\lambda=0$}    & \multicolumn{2}{c}{$\lambda=1$}        & \multicolumn{2}{c}{$\lambda=3$}   \\
\cmidrule{3-8}
$\alpha$&   h &      \multicolumn{1}{c}{$L^{2}$ Error } &  \multicolumn{1}{c}{Order}   & \multicolumn{1}{c}{$L^{2}$ Error }  & \multicolumn{1}{c}{Order}  &  \multicolumn{1}{c}{$L^{2}$ Error}  & \multicolumn{1}{c}{Order}\\
\midrule
&$1/10\ $&	6.53e-04	&		&	1.14e-03	&		&	4.63e-03	&		\\
&$1/20\ $&	8.59e-05	&	2.93	&	1.51e-04	&	2.92	&	6.44e-04	&	2.85	\\
$\alpha=0.6$&$1/40\ $&	1.10e-05	&	2.96	&	1.95e-05	&	2.96	&	8.49e-05	&	2.92	\\
&$1/80\ $&	1.39e-06	&	2.98	&	2.47e-06	&	2.98	&	1.09e-05	&	2.96	\\
&$1/160 $&	1.75e-07	&	2.99	&	3.11e-07	&	2.99	&	1.38e-06	&	2.98	\\

\cmidrule{1-8}
&$1/10\ $&	5.58e-03	&		&	9.62e-03	&		&	3.77e-02	&		\\
&$1/20\ $&	7.42e-04	&	2.91	&	1.30e-03	&	2.89	&	5.44e-03	&	2.79	\\
$\alpha=1.6$&$1/40\ $&	9.56e-05	&	2.96	&	1.69e-04	&	2.94	&	7.34e-04	&	2.89	\\
&$1/80\ $&	1.21e-05	&	2.98	&	2.15e-05	&	2.97	&	9.53e-05	&	2.94	\\
&$1/160 $&	1.53e-06	&	2.99	&	2.72e-06	&	2.99	&	1.21e-05	&	2.97	\\

\bottomrule
\end{tabular}
\caption{$L^2$ errors and orders of accuracy for $_x\mathfrak{D}_1^{\alpha,\lambda}(e^{\lambda x}(1-x)^{3+\alpha})$ by the third-order operator \eqref{2-27-1} for different $\lambda$ with $\gamma_4=-0.001$.}
\label{tab:4-1-4}
\end{table}

\end{enumerate}
\end{example}
\subsection{The tempered fractional diffusion equation}
In this subsection, we numerically test the accuracy of the second-order scheme \eqref{4-1-2} and the third-order scheme \eqref{4-2-2} for the tempered fractional diffusion equations.
\begin{example}\label{e2}
In this example, however, with $x\in[0,1]$, we choose different $\alpha$ for three different tempered fractional diffusion equations. To ensure the stability, we take $\gamma_3\in(a_1,a_2)$ for the second-order scheme \eqref{4-1-2}, and $\gamma_4\in(a_3,a_4)$ or $\gamma_4$ satisfying $f(\alpha;x)<0$ for the third-order scheme \eqref{4-2-2}. The numerical results are shown in Table \ref{tab:4-2-1}-\ref{tab:4-2-6} which confirm the desired accuracy. 
\begin{enumerate}
  \item
 We consider the following tempered fractional diffusion equation with the left tempered fractional derivative,
  \begin{equation}
  \label{eq:2-2}
  \frac{\partial u(x,t)}{\partial t}=_0D_x^{\alpha,\lambda}u(x,t)-e^{-\lambda x-t}\left(x^{3+\alpha}+\frac{\Gamma(4+\alpha)}{6}x^3\right),
  \end{equation}
  with the boundary conditions
  \begin{equation}
  \label{eq:2-2a}
  u(0,t)=0,\ u(1,t)=e^{-\lambda-t},\ t\in[0,1],
  \end{equation}
  and the initial value
  \begin{equation}
  \label{eq:2-2b}
  u(x,0)=e^{-\lambda x}x^{1+\alpha},\ x\in[0,1].
  \end{equation}
  The exact solution for \eqref{eq:2-2} is given by $u(x,t)=e^{-\lambda x-t}x^{3+\alpha}$.

  Let $\alpha=1.2\notin(1.26,1.71)$ for the tempered fractional diffusion equation \eqref{eq:2-2}. For the second-order scheme \eqref{4-1-2}, we choose $\gamma_3\in(a_1,a_2)$ to ensure the stability. For the third-order scheme \eqref{4-2-2}, we define $\gamma_4$ in such a way that $f(\alpha;x)<0$. Because of this latter choice, we expect a stable and accurate discretization, despite the fact that $\alpha\notin(1.26,1.71)$. Take $\tau=h,\ \lambda=4.$ We construct the second-order scheme and the third-order scheme for this Equation \eqref{eq:2-2}. Tables \ref{tab:4-2-1} and \ref{tab:4-2-2} show $L^2$ errors and orders of accuracy for both schemes, confirming the desired accuracy, in both cases.

\begin{table}[!ht]\centering
\begin{tabular}{c c c c c c c}
\toprule
&\multicolumn{2}{c}{$\gamma_3=-0.01$}    & \multicolumn{2}{c}{$\gamma_3=0$}        & \multicolumn{2}{c}{$\gamma_3=0.01$}   \\
\cmidrule{2-7}
    h &      \multicolumn{1}{c}{$L^{2}$ Error } &  \multicolumn{1}{c}{Order}   & \multicolumn{1}{c}{$L^{2}$ Error }  & \multicolumn{1}{c}{Order}  &  \multicolumn{1}{c}{$L^{2}$ Error}  & \multicolumn{1}{c}{Order}\\
\midrule
$1/10\ $&	2.20e-04	&		&	2.30e-04	&		&	2.40e-04	&		\\
$1/20\ $&	5.44e-05	&	2.02	&	5.73e-05	&	2.01	&	6.02e-05	&	2.00	\\
$1/40\ $&	1.33e-05	&	2.04	&	1.40e-05	&	2.03	&	1.48e-05	&	2.03	\\
$1/80\ $&	3.25e-06	&	2.03	&	3.45e-06	&	2.02	&	3.65e-06	&	2.02	\\
$1/160 $&	8.06e-07	&	2.01	&	8.56e-07	&	2.01	&	9.05e-07	&	2.01	\\

\bottomrule
\end{tabular}
\caption{$L^2$ errors and orders of accuracy for \eqref{eq:2-2} with the boundary conditions \eqref{eq:2-2a} and the initial value \eqref{eq:2-2b} by the second-order scheme \eqref{4-1-2}.}
\label{tab:4-2-1}
\end{table}

  \begin{table}[!ht]\centering
\begin{tabular}{c c c c c c c}
\toprule
&\multicolumn{2}{c}{$\gamma_4=-0.1$}    & \multicolumn{2}{c}{$\gamma_4=-0.2$}        & \multicolumn{2}{c}{$\gamma_4=-0.25$}   \\
\cmidrule{2-7}
    h &      \multicolumn{1}{c}{$L^{2}$ Error } &  \multicolumn{1}{c}{Order}   & \multicolumn{1}{c}{$L^{2}$ Error }  & \multicolumn{1}{c}{Order}  &  \multicolumn{1}{c}{$L^{2}$ Error}  & \multicolumn{1}{c}{Order}\\
\midrule
$1/10\ $&	7.55e-05	&		&	1.08e-04	&		&	1.25e-04	&		\\
$1/20\ $&	9.44e-06	&	3.00	&	1.41e-05	&	2.94	&	1.66e-05	&	2.92	\\
$1/40\ $&	1.13e-06	&	3.07	&	1.71e-06	&	3.04	&	2.05e-06	&	3.02	\\
$1/80\ $&	1.37e-07	&	3.04	&	2.09e-07	&	3.04	&	2.51e-07	&	3.03	\\
$1/160 $&	1.77e-08	&	2.96	&	2.61e-08	&	3.00	&	3.14e-08	&	3.00	\\

\bottomrule
\end{tabular}
\caption{$L^2$ errors and orders of accuracy for \eqref{eq:2-2} with the boundary conditions \eqref{eq:2-2a} and the initial value \eqref{eq:2-2b} by the third-order scheme \eqref{4-2-2}.}
\label{tab:4-2-2}
\end{table}

  \item
  We also consider the following tempered fractional diffusion equation with the right tempered fractional derivative
  \begin{equation}
  \label{eq:3-2}
  \frac{\partial u(x,t)}{\partial t}=_0D_x^{\alpha,\lambda}u(x,t)-e^{\lambda x-t}\left((1-x)^{3+\alpha}+\frac{\Gamma(4+\alpha)}{6}(1-x)^3\right),
  \end{equation}
  with the boundary conditions
    \begin{equation}
  \label{eq:3-2a}
  u(0,t)=e^{-t},\ u(1,t)=0,\ t\in[0,1],
  \end{equation}
  and the initial value
  \begin{equation}
  \label{eq:3-2b}
  u(x,0)=e^{\lambda x}(1-x)^{3+\alpha},\ x\in[0,1].
  \end{equation}
  The exact solution for \eqref{eq:3-2} is given by $u(x,t)=e^{\lambda x-t}(1-x)^{3+\alpha}$.

Let $\alpha=1.8\notin(1.26,1.71)$ for the tempered fractional diffusion equation \eqref{eq:3-2}. We choose $\gamma_3\in(a_1,a_2)$ for the second-order scheme \eqref{4-1-2}, and define $\gamma_4$ satisfying $f(\alpha;x)<0$ for the third-order scheme \eqref{4-2-2}. Take $\tau=h,\ \lambda=4.$ Tables \ref{tab:4-2-3} and \ref{tab:4-2-4} confirm the desired $L^2$ errors and orders of accuracy for both schemes.
  \begin{table}[!ht]\centering
\begin{tabular}{c c c c c c c}
\toprule
&\multicolumn{2}{c}{$\gamma_3=-0.03$}    & \multicolumn{2}{c}{$\gamma_3=-0.02$}        & \multicolumn{2}{c}{$\gamma_3=-0.01$}   \\
\cmidrule{2-7}
    h &      \multicolumn{1}{c}{$L^{2}$ Error } &  \multicolumn{1}{c}{Order}   & \multicolumn{1}{c}{$L^{2}$ Error }  & \multicolumn{1}{c}{Order}  &  \multicolumn{1}{c}{$L^{2}$ Error}  & \multicolumn{1}{c}{Order}\\
\midrule
$1/10\ $&	1.41e-02	&		&	1.48e-02	&		&	1.55e-02	&		\\
$1/20\ $&	3.51e-03	&	2.00	&	3.78e-03	&	1.97	&	4.04e-03	&	1.94	\\
$1/40\ $&	8.65e-04	&	2.02	&	9.44e-04	&	2.00	&	1.02e-03	&	1.98	\\
$1/80\ $&	2.14e-04	&	2.02	&	2.35e-04	&	2.00	&	2.57e-04	&	1.99	\\
$1/160 $&	5.31e-05	&	2.01	&	5.87e-05	&	2.00	&	6.44e-05	&	2.00	\\

\bottomrule
\end{tabular}
\caption{$L^2$ errors and orders of accuracy for \eqref{eq:3-2} with the boundary conditions \eqref{eq:3-2a} and the initial value \eqref{eq:3-2b} by the second-order scheme \eqref{4-1-2}.}
\label{tab:4-2-3}
\end{table}

  \begin{table}[!ht]\centering
\begin{tabular}{c c c c c c c}
\toprule
&\multicolumn{2}{c}{$\gamma_4=0.01$}    & \multicolumn{2}{c}{$\gamma_4=0.02$}        & \multicolumn{2}{c}{$\gamma_4=0.03$}   \\
\cmidrule{2-7}
    h &      \multicolumn{1}{c}{$L^{2}$ Error } &  \multicolumn{1}{c}{Order}   & \multicolumn{1}{c}{$L^{2}$ Error }  & \multicolumn{1}{c}{Order}  &  \multicolumn{1}{c}{$L^{2}$ Error}  & \multicolumn{1}{c}{Order}\\
\midrule
$1/10\ $&	7.99e-03	&		&	7.62e-03	&		&	7.29e-03	&		\\
$1/20\ $&	1.19e-03	&	2.75	&	1.11e-03	&	2.78	&	1.04e-03	&	2.81	\\
$1/40\ $&	1.61e-04	&	2.88	&	1.47e-04	&	2.91	&	1.35e-04	&	2.94	\\
$1/80\ $&	2.10e-05	&	2.94	&	1.90e-05	&	2.95	&	1.72e-05	&	2.98	\\
$1/160 $&	2.72e-06	&	2.95	&	2.45e-06	&	2.96	&	2.19e-06	&	2.97	\\

\bottomrule
\end{tabular}
\caption{$L^2$ errors and orders of accuracy for \eqref{eq:3-2} with the boundary conditions \eqref{eq:3-2a} and the initial value \eqref{eq:3-2b} by the third-order scheme \eqref{4-2-2}.}
\label{tab:4-2-4}
\end{table}

\item We also analyze the following tempered fractional advection-diffusion equation
  \begin{equation}
  \label{eq:2-20}
  \frac{\partial u(x,t)}{\partial t}=c_l\left( _0D_x^{\alpha,\lambda}u(x,t)\right)+c_r\left( _xD_1^{\alpha,\lambda}u(x,t)\right)+f(x,t),
  \end{equation}
  with the boundary conditions,
  \begin{equation}
  \label{eq:2-20a}
  u(0,t)=0,\ u(1,t)=0,\ t\in[0,1],
  \end{equation}
  and the initial value,
  \begin{equation}
  \label{eq:2-20b}
  u(x,0)=e^{-\lambda x}x^{1+\alpha},\ x\in[0,1].
  \end{equation}
  With
  \begin{equation}\nonumber
  \begin{aligned}
  f=&-e^{-t}x^{3}(1-x)^{3}-e^{-\lambda x-t}\left( _0D_x^{\alpha}\Big[e^{\lambda x}(x^3-3x^4+3x^5-x^6)\Big]\right)\\
  &-e^{\lambda x-t}\left(_xD_1^{\alpha}\Big[e^{-\lambda x}((1-x)^3-3(1-x)^4+3(1-x)^5-(1-x)^6)\Big]\right),
  \end{aligned}
  \end{equation}
  the exact solution for \eqref{eq:2-20} reads $u(x,t)=e^{-t}x^{3}(1-x)^{3}$. To compute $f(x,t)$, we use the following formulae,
\begin{equation}
\begin{aligned}
_0D_x^{\alpha}(e^{\lambda x}x^m)=_0D_x^{\alpha}\left(\sum_{n=0}^{\infty}\frac{\lambda^n}{n!}x^{n+m}\right)
=\sum_{n=0}^{\infty}\frac{\lambda^n\Gamma(n+m+1)}{n!\Gamma(n+m-\alpha+1)}x^{n+m-\alpha},
\end{aligned}
\end{equation}
and
\begin{equation}
\begin{aligned}
_xD_1^{\alpha}(e^{\lambda x}(1-x)^m)=&_xD_1^{\alpha}\left(\sum_{n=0}^{\infty}\frac{\lambda^ne^{-\lambda}}{n!}(1-x)^{n+m}\right)\\
=&e^{-\lambda}\sum_{n=0}^{\infty}\frac{\lambda^n\Gamma(n+m+1)}{n!\Gamma(n+m-\alpha+1)}(1-x)^{n+m-\alpha}.
\end{aligned}
\end{equation}
Take $\alpha=1.5\in(1.26,1.71)$ for the tempered fractional diffusion equation \eqref{eq:2-20}. We choose $\gamma_3\in(a_1,a_2)$ for the second-order scheme \eqref{4-1-2}, and $\gamma_4=-0.04,~-0.03,~-0.02$  for the third-order scheme \eqref{4-2-2}, where $\gamma_4=-0.04,\ -0.03\in(a_3,a_4)$ and $\gamma_4=-0.02\notin(a_3,a_4)$ but satisfies $f(\alpha;x)<0$.
Let $\lambda=0.5,\ \ c_r =c_l=0.5,$ and $\tau=10^{-3}.$ Tables \ref{tab:4-2-5} and \ref{tab:4-2-6} show the corresponding $L^2$ errors and the second and third orders of accuracy, respectively.
 \begin{table}[!ht]\centering
\begin{tabular}{c c c c c c c}
\toprule
&\multicolumn{2}{c}{$\gamma_3=-0.04$}    & \multicolumn{2}{c}{$\gamma_3=-0.03$}        & \multicolumn{2}{c}{$\gamma_3=-0.02$}   \\
\cmidrule{2-7}
    h &      \multicolumn{1}{c}{$L^{2}$ Error } &  \multicolumn{1}{c}{Order}   & \multicolumn{1}{c}{$L^{2}$ Error }  & \multicolumn{1}{c}{Order}  &  \multicolumn{1}{c}{$L^{2}$ Error}  & \multicolumn{1}{c}{Order}\\
\midrule
$1/2^4 $&	4.15e-05	&		&	4.19e-05	&		&	4.25e-05	&		\\
$1/2^5 $&	9.55e-06	&	2.12	&	9.98e-06	&	2.07	&	1.04e-05	&	2.03	\\
$1/2^6 $&	2.29e-06	&	2.06	&	2.44e-06	&	2.03	&	2.59e-06	&	2.01	\\
$1/2^7 $&	5.62e-07	&	2.03	&	6.04e-07	&	2.01	&	6.46e-07	&	2.00	\\
$1/2^8 $&	1.39e-07	&	2.01	&	1.50e-07	&	2.01	&	1.62e-07	&	2.00	\\

\bottomrule
\end{tabular}
\caption{$L^2$ errors and orders of accuracy for \eqref{eq:2-20} with the boundary conditions \eqref{eq:2-20a} and the initial value \eqref{eq:2-20b} by the second-order scheme \eqref{4-1-2}.}
\label{tab:4-2-5}
\end{table}

  \begin{table}[!ht]\centering
\begin{tabular}{c c c c c c c}
\toprule
&\multicolumn{2}{c}{$\gamma_4=-0.04$}    & \multicolumn{2}{c}{$\gamma_4=-0.03$}        & \multicolumn{2}{c}{$\gamma_4=-0.02$}   \\
\cmidrule{2-7}
    h &      \multicolumn{1}{c}{$L^{2}$ Error } &  \multicolumn{1}{c}{Order}   & \multicolumn{1}{c}{$L^{2}$ Error }  & \multicolumn{1}{c}{Order}  &  \multicolumn{1}{c}{$L^{2}$ Error}  & \multicolumn{1}{c}{Order}\\
\midrule
$1/2^4 $&	5.41e-05	&		&	5.33e-05	&		&	5.39e-05	&		\\
$1/2^5 $&	7.98e-06	&	2.76	&	7.68e-06	&	2.79	&	7.48e-06	&	2.85	\\
$1/2^6 $&	1.12e-06	&	2.83	&	1.06e-06	&	2.85	&	1.02e-06	&	2.88	\\
$1/2^7 $&	1.52e-07	&	2.88	&	1.43e-07	&	2.89	&	1.35e-07	&	2.91	\\
$1/2^8 $&	2.02e-08	&	2.91	&	1.89e-08	&	2.92	&	1.78e-08	&	2.93	\\

\bottomrule
\end{tabular}
\caption{$L^2$ errors and orders of accuracy for \eqref{eq:2-20} with the boundary conditions \eqref{eq:2-20a} and the initial value \eqref{eq:2-20b} by the third-order scheme \eqref{4-2-2}.}
\label{tab:4-2-6}
\end{table}
\end{enumerate}
\end{example}

\subsection{The tempered fractional Black--Scholes equations}
In this subsection, we construct a third-order scheme for the tempered fractional Black--Scholes equation. These numerical results come without a proof of stability here.
We experimentally show that the developed schemes are robust and accurate, also for a convection-(fractional) diffusion type equation.

Consider the fractional PDE,
\begin{equation}
\label{3-0}
\begin{aligned}
&\frac{\partial u(x,t)}{\partial t}+a \cdot \frac{\partial u(x,t)}{\partial x}
+b\cdot _{B_d}\mathfrak{D}_x^{\alpha,\lambda_1}u(x,t)+d\cdot _x\mathfrak{D}_{B_u}^{\alpha,\lambda_2}u(x,t)
=p u(x,t),\\
\end{aligned}
\end{equation}
where $\alpha\in(1,2)$, the parameters $ b,\ d,\ p,\ \lambda_1$ and $\lambda_2$ are all non-negative. With different values for the parameters $a,\ b,\ d,\ p,\ \lambda_1$ and $\lambda_2$, we find variations for Equation \eqref{3-0}.

We her consider problem \eqref{3-0} with a source term $f(x,t)$ added to test the numerical scheme in the following form,
\begin{equation}
\label{3-1}
\left\{
\begin{aligned}
&\frac{\partial u(x,t)}{\partial t}+a \frac{\partial u(x,t)}{\partial x}
+b_{B_d}\mathfrak{D}_x^{\alpha,\lambda_1}u(x,t)+d_x\mathfrak{D}_{B_u}^{\alpha,\lambda_2}u(x,t)
=p u(x,t)+f(x,t),\\
&(x,t)\in(B_d,B_u)\times(0,T)\\
&u(B_d,t)=0,u(B_u,t)=0,\ t\in(0,T)\\
&u(x,T)=S(x),x\in(B_d,B_u),\ x\in(B_d,B_u).
\end{aligned}
\right.
\end{equation}

Let $t_j=(N-j)\tau$, $ 0\leq t_j\leq T,\ j=0,\dots,N$ and $x_i=B_d+ih,$ $B_d\leq x_i\leq B_u,\ I=0,\dots,M$, where $\tau=T/N$ and $h=(B_u-B_d)/M$. Using the tempered-WSGD operators, $_L\mathcal{D}^{\alpha,\lambda_1}_h=_L\mathcal{D}_{h,-1,0,\alpha-1,1}^{\alpha,\gamma_1,\gamma_2,...,\gamma_4}$ and $_R\mathcal{D}^{\alpha,\lambda_2}_h=_R\mathcal{D}_{h,-1,0,\alpha-1,1}^{\alpha,\gamma_1,\gamma_2,...,\gamma_4}$, for the tempered fractional derivatives, 
and the forth-order scheme for the first-order space derivative, we get the following space discretization for \eqref{3-0}
\begin{equation}
\label{3-3}
\begin{aligned}
&\frac{\partial u_{i}}{\partial t}
+a\frac{8(u_{i+1}-u_{i-1})-(u_{i+2}-u_{i-2})}{12h}
+b\left(_L\mathcal{D}^{\alpha,\lambda_1}_h{u_{i}}\right)
+d\left(_R\mathcal{D}^{\alpha,\lambda_2}_h{u_{i}}\right)\\
&\qquad=pu_{i}+f_i+O(h^3),
\end{aligned}
\end{equation}
where $u_i$ is the solution of \eqref{3-1} when $x=x_i$, and $f_i=f(x_i,t)$.

For \eqref{3-3}, the Crank-Nicolson time discretization reads
\begin{equation}
\label{3-5}
\begin{aligned}
&-\frac{u^{j+1}_{i}-u_{i}^j}{\tau}
+a\frac{8(u^{j+\frac{1}{2}}_{i+1}-u^{j+\frac{1}{2}}_{i-1})-(u^{j+\frac{1}{2}}_{i+2}-u^{j+\frac{1}{2}}_{i-2})}{12h}
+b\left(_L\mathcal{D}^{\alpha,\lambda_1}_h{u^{j+\frac{1}{2}}_{i}}\right)
+d\left(_R\mathcal{D}^{\alpha,\lambda_2}_h{u^{j+\frac{1}{2}}_{i}}\right)\\
&\qquad=pu^{j+\frac{1}{2}}_{i}+f^{j+\frac{1}{2}}_{i}+O(\tau^2+h^3),
\end{aligned}
\end{equation}
where $u_i^j$ is the solution of \eqref{3-1} at the point $(x_i,t_j)$, and $f_i^{j+\frac{1}{2}}=f(x_i,t_{j+\frac{1}{2}})$.

The numerical scheme can now be written as
\begin{equation}
\label{3-7}
\begin{aligned}
&(1+\frac{\tau}{2}p)U_{i}^{j+1}
-\frac{\tau}{2}\left[a\frac{8(U^{j+1}_{i+1}-U^{j+1}_{i-1})-(U^{j+1}_{i+2}-U^{j+1}_{i-2})}{12h}
+b_L\mathcal{D}^{\alpha,\lambda_1}_hU_{i}^{j+1}
+d_R\mathcal{D}^{\alpha,\lambda_2}_hU_{i}^{j+1}
\right]\\
&\qquad=(1-\frac{\tau}{2}p)U_{i}^j
+\frac{\tau}{2}\left[a\frac{8(U^{j}_{i+1}-U^{j}_{i-1})-(U^{j}_{i+2}-U^{j}_{i-2})}{12h}
+b_L\mathcal{D}^{\alpha,\lambda_1}_hU_{i}^{j}
+d_R\mathcal{D}^{\alpha,\lambda_2}_hU_{i}^{j}
\right]-F_i^{j+\frac{1}{2}},
\end{aligned}
\end{equation}
where $U_i^j$ is the solution of the numerical scheme for \eqref{3-1} at point $(t_i,t_j)$, and $F_i^{j+\frac{1}{2}}=\frac{1}{2}(f_i^{j}+f_i^{j+1})$.

\begin{example}\label{e3}
We here consider the following tempered fractional equation,
\begin{equation}
\label{3-18}
\left\{
\begin{aligned}
&\frac{\partial u(x,t)}{\partial t}+a \frac{\partial u(x,t)}{\partial x}
+b\left(_{0}\mathfrak{D}_x^{\alpha,\lambda_1}u(x,t)\right)
=d u(x,t)+f(x,t),\\
&(x,t)\in(0,1)\times(0,T)\\
&u(0,t)=0,u(1,t)=0,\ t\in(0,T)\\
&u(x,T)=S(x),x\in(0,1).
\end{aligned}
\right.
\end{equation}
where
\begin{equation}\nonumber
\begin{aligned}
f=e^{-\lambda x+(T-t)}\Bigg\{&x^3(1-x)(-1-a\lambda-b\lambda^\alpha-p)
+a(3x^2-4x^3)\\
&+b\left(\frac{\Gamma(4)}{\Gamma(4-\alpha)}x^{3-\alpha}
-\frac{\Gamma(5)}{\Gamma(5-\alpha)}x^{4-\alpha}\right)\Bigg\}.
\end{aligned}
\end{equation}
The exact solution of the above equation is $u=e^{T-t}x^3(1-x)^3$. Let the parameters $b=-\frac{1}{2}\sigma^\alpha \sec(\frac{\alpha\pi}{2})$, $a=r-b$ and $d=r$. Take $\alpha=1.6,\ \lambda=1,\ \sigma=0.25,$ $ r=0.05$, and $\tau=10^{-4}$. Table \ref{t3-1} lists the $L^2$ and $L^\infty$ errors and orders of accuracy for Equation \eqref{3-18}, which confirm the desired accuracy with $\gamma_4=-0.5$.
\begin{table}[!ht]\centering
\begin{tabular}{c c c c c }
\toprule
   h &      \multicolumn{1}{c}{$L^{2}$ Error } &  \multicolumn{1}{c}{Order}   & \multicolumn{1}{c}{$L^{\infty}$ Error }  & \multicolumn{1}{c}{Order}  \\
\midrule
$1/2^5\ $&	7.95e-05	&		&	2.58e-04	&		\\
$1/2^6\ $&	1.08e-05	&	2.88	&	3.99e-05	&	2.69	\\
$1/2^7\ $&	1.38e-06	&	2.97	&	5.62e-06	&	2.83	\\
$1/2^8\ $&	1.71e-07	&	3.02	&	7.54e-07	&	2.90	\\
$1/2^9\ $&	2.09e-08	&	3.03	&	9.81e-08	&	2.94	\\
\bottomrule
\end{tabular}
\caption{$L^2$ and $L^\infty$ errors and orders of accuracy for \eqref{3-18} with $\gamma_4=-0.5$.}
\label{t3-1}
\end{table}
\end{example}

\begin{example}\label{e4}
We finally consider the following tempered fractional model
\begin{equation}
\label{3-25}
\left\{
\begin{aligned}
&\frac{\partial u(x,t)}{\partial t}+a \frac{\partial u(x,t)}{\partial x}
+b\left(_{0}\mathfrak{D}_x^{\alpha,\lambda_1}u(x,t)\right)
+c\left(_{0}\mathfrak{D}_x^{\alpha,\lambda_1}u(x,t)\right)
=d u(x,t)+f(x,t),\\
&(x,t)\in(0,1)\times(0,T)\\
&u(0,t)=0,u(1,t)=0,\ t\in(0,T)\\
&u(x,T)=S(x),x\in(0,1),
\end{aligned}
\right.
\end{equation}
where
\begin{equation}\nonumber
\begin{aligned}
f=&-(1+a\lambda_1^\alpha+b\lambda_2^\alpha+p)u(x,t)
+3ae^{T-t}x^2(1-x)^2(1-2x)\\
&+be^{-\lambda_1 x+(T-t)}\left( _0D_x^{\alpha}e^{\lambda_1x}u(x,t)\right)
+ce^{\lambda_2 x+(T-t)}\left(_xD_1^{\alpha}e^{-\lambda_2x}u(x,t)\right).
\end{aligned}
\end{equation}
The exact solution of the above equation is given by $u=e^{-\lambda x+(T-t)}x^3(1-x)$.

Let the parameters $b=c=d=1$ and $a=-0.5$. Take $\alpha=1.8,\ \lambda_1=0.5,\ \lambda_2=1,$ and $\tau=10^{-4}$. Table \ref{t3-2} lists the $L^2$ and $L^\infty$ errors and orders of accuracy for Equation \eqref{3-25}, again confirming the desired accuracy with $\gamma_4=0$.
\begin{table}[!ht]\centering
\begin{tabular}{c c c c c }
\toprule
   h &      \multicolumn{1}{c}{$L^{2}$ Error } &  \multicolumn{1}{c}{Order}   & \multicolumn{1}{c}{$L^{\infty}$ Error }  & \multicolumn{1}{c}{Order}  \\
\midrule
$1/2^5\ $&	2.61e-05	&		&	4.28e-05	&		\\
$1/2^6\ $&	3.57e-06	&	2.87	&	5.52e-06	&	2.95	\\
$1/2^7\ $&	4.74e-07	&	2.91	&	7.08e-07	&	2.96	\\
$1/2^8\ $&	6.19e-08	&	2.94	&	9.06e-08	&	2.97	\\
$1/2^9\ $&	7.99e-09	&	2.95	&	1.16e-08	&	2.97	\\

\bottomrule
\end{tabular}
\caption{$L^2$ and $L^\infty$ errors and orders of accuracy for \eqref{3-25} with $\gamma_4$.}
\label{t3-2}
\end{table}

\end{example}

\section{Conclusion}
In this paper, we presented stability analysis and error estimates for numerical schemes for the tempered fractional diffusion equation. We focussed on the third-order semi-discretized scheme in space and showed error analysis for these fully discrete scheme based on the Crank--Nicolson scheme in time. We also provided the third-order scheme for the tempered fractional Black-Scholes equation. Clearly, the stable numerical schemes proposed in this paper are computationally highly accurate and efficient.

\bibliographystyle{bibft}\it
\bibliography{bibfile}

\end{document}